\title{On the Hausdorff dimension of a 2-dimensional Weierstrass curve}
\author{
Peter Imkeller$^1$ \footnote{P.~Imkeller was supported in part by DFG Research Unit FOR 2402.}\\
        \small imkeller@mathematik.hu-berlin.de
\and
Gon\c calo dos Reis$^{2,3}$ \footnote{G.~dos Reis acknowledges support from the \emph{Funda{\c c}$\tilde{\text{a}}$o para a Ci$\hat{e}$ncia e a Tecnologia} (Portuguese Foundation for Science and Technology) through the project UID/MAT/00297/2013 (Centro de Matem\'atica e Aplica\c c$\tilde{\text{o}}$es CMA/FCT/UNL).} \\ 
        \small  G.dosReis@ed.ac.uk
}
\date{ 
\normalsize $^1$Humboldt Universit\"at zu Berlin\\%
    $^2$University of Edinburgh\\
		$^3$Centro de Matem\'atica e Aplica\c c$\tilde{\text{o}}$es, (CMA), FCT, UNL\\
		[2ex]%
		\currenttime, \ddmmyyyydate\today 
		\qquad{(File \tt \jobname.tex})
		}
\numberwithin{equation}{section}
\newcommand{\dt}{\mathrm{d}t}
\newcommand{\dz}{\mathrm{d}z}
\def\om{\omega}
\def\Om{\Omega}
\def\un{\infty}
\def\eh{\frac{1}{2}}
\def\nh{\frac{n}{2}}
\def\zp{2 \pi}
\def\cosi{\left( \begin{array}{c}
\cos \\ {\sin}
\end{array}\right)}
\def\sico{\left( \begin{array}{c}
-\sin\\ \cos
\end{array}\right)}
\def\nn{\nonumber}
\def\bx{\overline{x}}
\def\bxi{\overline{\xi}}
\newtheorem{thm}{Theorem}[section]
\newtheorem{pr}[thm]{Proposition}
\newtheorem{co}[thm]{Corollary}
\newtheorem{lem}[thm]{Lemma}
\newcommand{\C}{\mathbb{C}}
\newcommand{\N}{\mathbb{N}}
\newcommand{\R}{\mathbb{R}}
\newcommand{\Z}{\mathbb{Z}}
\newcommand{\be}{\begin{equation}}
\newcommand{\ee}{\end{equation}}
\newcommand{\bdm}{\begin{displaymath}}
\newcommand{\edm}{\end{displaymath}}
\newcommand{\bean}{\begin{eqnarray}}
\newcommand{\eean}{\end{eqnarray}}
\newcommand{\bea}{\begin{eqnarray*}}
\newcommand{\eea}{\end{eqnarray*}}
\newcommand{\ud}{\mathrm{d}}
\newcommand{\1}{\mathbbm{1}}
\newcommand{\cB}{\mathcal{B}}
\begin{document}
\selectlanguage{english}
\maketitle

\begin{abstract}
We compute the Hausdorff dimension of a two-dimensional Weierstrass function, related to lacunary (Hadamard gap) power series, that has no L\'evy area. This is done by interpreting it as a pullback attractor of a dynamical system based on the Baker transformation. A lower bound for the Hausdorff dimension is obtained by investigating the pushforward of the Lebesgue measure on the graph along scaled neighborhoods of stable fibers of the underlying dynamical system following the graph. Scaling ideas are crucial. They become accessible by self similarity properties of a mapping whose increments coincide with vertical distances on the stable fibers.

\end{abstract}

{\bf 2000 AMS subject classifications:} primary 37D20; secondary 37D45; 37G35; 37H20.

{\bf Key words and phrases:} Multi-dimensional Weierstrass function; Hausdorff dimension; stable manifold; scaling.

\section{Introduction}

The study of the 2-dimensional Weierstrass function considered in this paper had its starting point in a Fourier analytic approach of rough path analysis or rough integration theory laid out in \cite{gubinelliimkellerperkowski2015} and \cite{gubinelliimkellerperkowski2016}. In \cite{gubinelliimkellerperkowski2016}, the construction of a Stratonovich type integral of a rough function $f$ with respect to another rough function $g$ is related to the notion of paracontrol of $f$ by $g$. This Fourier analytic concept generalizes the original control concept introduced by Gubinelli \cite{gubinelli2004}. In search of a good example of a pair of functions not controlling each other, in \cite{imkellerproemel15} the authors came up with the pair of Weierstrass functions defined by 
\begin{align*}
W=(W_1, W_2) = \sum_{n=0}^\infty 2^{-\nh} \cosi \big(\zp 2^n x\big),\quad x\in[0,1].
\end{align*}
The first component $W_1$ fluctuates on all dyadic scales in a cosinusoidal manner while the second one in a sinusoidal way. Hence when the first one has minimal increments, the second one has maximal ones, and vice versa. This can be seen to mathematically rigorously underpin the fact that they are mutually not controlled (see \cite[Example 2.8]{imkellerproemel15}). It also implies that the L\'evy areas of the approximating finite sums of the representing series do not converge. This geometric pathology motivated us to look for further geometric properties of the pair. The question arose: if it is so difficult to define an integral of one component with respect to the other, and L\'evy's area fails to exist, is the curve possibly space filling, at least on a nontrivial portion of its graph? This gave rise to the main goal of this paper: to investigate at least one characteristic of singularity of the curve, the Hausdorff dimension of its graph. We shall show that it equals 2, substantiating our conjecture that the curve is thick in space. Functions with a uniform fractal structure on the domain such as our Weierstrass functions occur everywhere in nature. They are widely applicable in physics because their graphs play an important role as invariant sets in dynamical systems. Following \cite{carvalho2011}, results on sets such as the graphs of $W$ are of independent interest in the investigation of SPDE in dimensions higher than one that are defined on fractal sets.

Figure \ref{fig:idrRangeW-v01} and \ref{fig:idrRangeW-v01-3D} show, respectively, the Range of $W$ over $[0,1]$ (subset of $\R^2$) and the Graph of $W$ over $[0,1]$ (subset of $\R^3$).
\begin{figure}[htbp]
	\centering
		\includegraphics[width=0.5\textwidth]{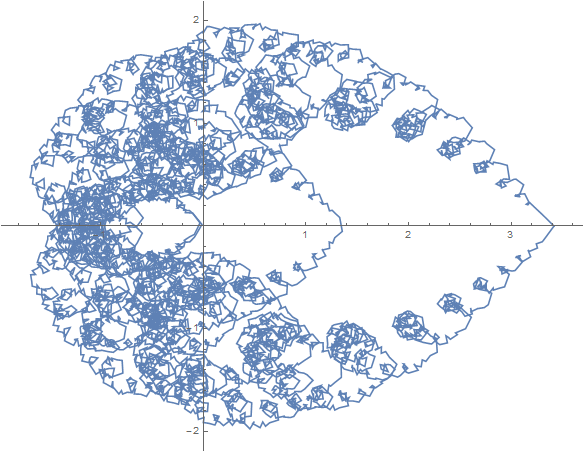}
	\caption{Parametric plot (Range) of $W$ for $x\in[0,1]$; $\{W(x):x\in[0,1]\}\subset \R^2$.}
	\label{fig:idrRangeW-v01}
\end{figure}

\begin{figure}[htbp]
	\centering
		\includegraphics[width=0.5\textwidth]{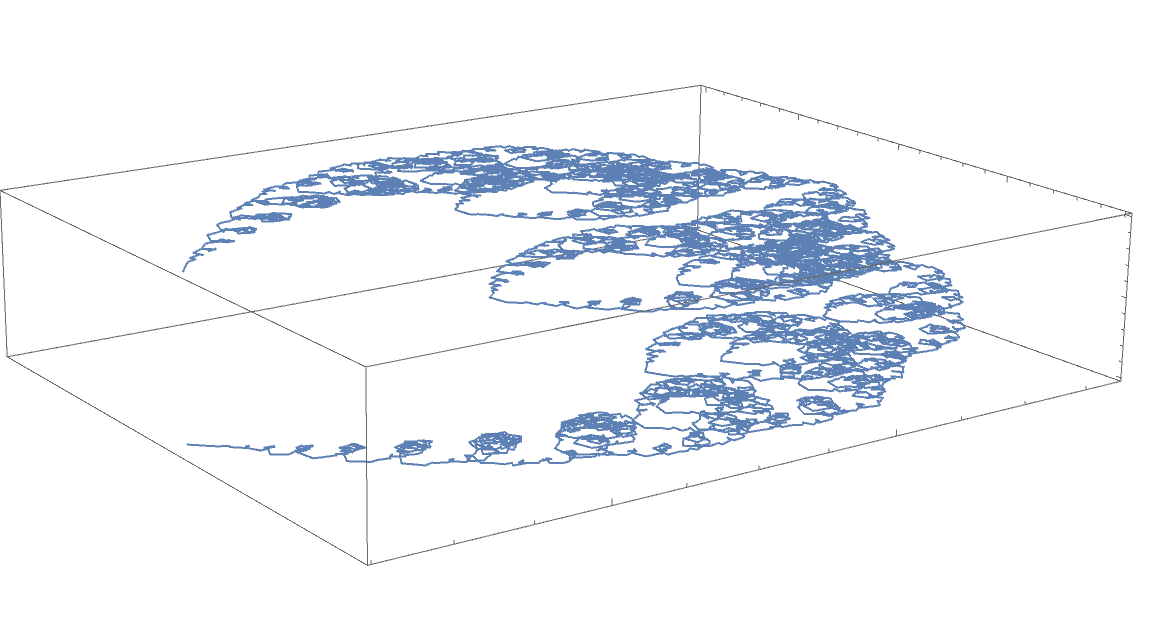}
	\caption{Graph of $W$; $\{\big(x,W(x)\big):x\in[0,1]\}\subset \R^3$.}
	\label{fig:idrRangeW-v01-3D}
\end{figure}

The map $W$ can also be understood as the real and imaginary part of a certain complex function, namely,
\[
\widehat W(z) = \sum_{n=0}^\infty 2^{-\nh} z^{2^n},\quad \quad z\in\C,\ |z|<1.
\]
This analytical interpretation was the original idea of Hardy \cite{hardy1916} to prove that the components of $W$ are nowhere differentiable. The map $\widehat W$ is referred to as the lacunary (Hadamard gaps) complex power series (see \cite{baranski02}).
\medskip

\emph{Methodological guidelines.} It has been noticed in a number of works on one dimensional Weierstrass type curves (see \cite{hunt1998}, \cite{baranski02}, \cite{baranski12}, \cite{baranski14published}, \cite{baranski15survey}, \cite{keller17-Publication-of-2015}, \cite{shen2017-Publication-of-2015}) that the number of iterations of the expansion by a real factor present in the arguments of the terms of their expansion can be taken as a starting point in interpreting their graphs as pullback attractors of dynamical systems in which a baker transformation defines the dynamics. This observation marks, in many of the mentioned works, the point of departure for determining the Hausdorff dimension of graphs of one dimensional Weierstrass type functions. For a historical survey of this work the reader may consult \cite{baranski14published}. For our curve we use the same metric dynamical system based on a suitable baker transformation $B$ as a starting point. This is done by introducing, besides a variable $x$ that encodes expansion by the factor $2$ forward in time, an auxiliary variable $\xi$ describing contraction by the factor $\frac{1}{2}$ in turn, forward in time as well. Backward in time, the sense of expansion and contraction is interchanged. Consequently, if $\gamma = \frac{1}{\sqrt{2}}$, the $n$-th term of the series representing $W$ is given by $\gamma^{n} \cosi\big(\zp B^{-n}(\xi, x)\big)$. The action of applying a forward expansion in one step just corresponds to stepping from one term in the expansion of $W$ to the following one. This indicates that $W$ is an attractor of a dynamical system $F$ that, besides contracting the two leading variables by the factor $\gamma$, adds the first term of the series to the result. So by definition of $F$, $W$ is its attractor. Since $\frac{1}{2}$, the factor of $x$ in the forward fibre motion, is the smallest Lyapunov exponent of the linearization of $F$, there is a stable manifold related to this Lyapunov exponent. It is spanned by the vector which is given as a Weierstrass type series $S(\xi,x)$, the $n$-th term of which is given by $- \zp \gamma^n \sico\big(\zp B^n(\xi,x)\big)$, as will be explained below. The pushforward of the Lebesgue measure by $S(\cdot, x)$ for $x\in[0,1]$ fixed, is the $x$-marginal of the Sinai-Bowen-Ruelle measure of $F$. The definition of $F$ as a linear transformation added to a very smooth function may be understood as conveying the concept of \emph{self-affinity} for the Weierstrass curve. The (random) dynamical system on the metric space $[0,1]^2$ underlying our analytical interpretation gives rise to an enhancement of this property to \emph{self-similarity}, a very convenient notion widely used in the theory of the fine structure of stochastic processes. To make this important step, we define
$$H(\xi,x)
= \sum_{n\in\Z} 2^{-\frac{n}{2}} \left[\cosi \big(\zp B^{-n}_2(\xi, x)\big) - \cosi \big(\zp B^{-n}_2(\xi, 0)\big)\right],
\quad \xi, x\in[0,1].$$
In Proposition \ref{p:scaling} we assess the scaling properties of $H$, leading to self similarity of increments of $H$ with respect to Lebesgue measure.
In our key Lemma \ref{l:doubly_infinite_series} we see that $H$ and $W$ are linked by the formula
$$H(\xi,y) - H(\xi,x) = W(y) - W(x) - \int_x^y S(\xi,z) dz,$$
thus making $W$ accessible to self similarity studies. Another key observation is that our analysis provides a geometric interpretation of the increments of $H$. Define the \emph{stable fiber} through a point $(x, W(x))$ of the graph of $W$ by solutions of the initial value problem of the ODE
$$\frac{d}{d v} l_{\xi,x,w)}(v) = S(\xi, v),\quad l_{(\xi,x,w)}(x) = w,$$
where we set $w = W(x).$ Then vertical distances on different stable fibers are just given by the increments of $H$:
$$l_{(\xi, y, W(y))}(y) - l_{(\xi, x, W(x))}(y) = H(\xi,y) - H(\xi,x),\quad \xi, x, y\in[0,1].$$ 
This is the crucial observation for determining a lower bound for the Hausdorff dimension of the graph of $W$. Following Keller \cite{keller17-Publication-of-2015}, we find the sharp lower bound by investigating the local dimension of the Lebesgue measure on the graph of $W$. This is done by assessing the measure of dyadically scaled small neighborhoods of the stable fibers. By their relationship to the self similar process $H$, they become susceptible to arguments using the scaling properties of $H$. By taking full advantage of the self-similarity property we are able to present simpler proofs.

Lastly, the sharp upper bound for the Hausdorff dimension follows from classical estimates on the Box dimension (known to dominated the Hausdorff one) where we use that $W$ is H\"older continuous (see e.g. \cite[Proposition 4.14]{mortersperes10}).  More generally, \cite{baranski02} shows that the Box dimension of the graph of $W$ in \eqref{eq:DefinitionW} is $3-2\alpha$, for $\alpha\leq 1/2$. In fact, it is proven in \cite{baranski02} that for $\alpha$ sufficiently small the image of $W$ has a non-empty interior in the topology of the plane.
\smallskip 

\emph{Organization.} The manuscript is organized along the lines of reasoning described above. In Section 2, repeating \cite{baranski02}, \cite{hunt1998} or \cite{keller17-Publication-of-2015}, we explain the interpretation of our Weierstrass curve in terms of dynamical systems based on the baker transform. In Section 3, we make the step from self affinity to self similarity and explore the scaling properties of $H$. In Section 4, we use the geometric interpretation of the vertical distance between stable fibers and increments of $H$ to finally estimate a lower bound on the Hausdorff dimension of the graph of $W$. Section 5 just recapitulates simple known facts about the easier upper bounds on Hausdorff dimensions of rough graphs, leading to the main result of the paper, Theorem \ref{t:main}.

\bigskip

\textbf{Acknowledgements.} The authors thank the financial support of the \emph{International Centre for Mathematical Sciences} (ICMS) Research-in-Groups programme which allowed us to work together in Edinburgh on this manuscript.

\section{The curve as attractor of a dynamical system}\label{s:attractor}

Our aim is to investigate the Hausdorff dimension of the graph of the two-dimensional Weierstrass curve given by
\be
\label{eq:DefinitionW}
W(x) = \big( W_1(x), W_2(x) \big) = \sum_{n=0}^\infty 2^{-\nh} \cosi (\zp 2^n x),\quad x\in[0,1].
\ee
In this section we shall describe a dynamical system on $[0,1]^2$, alternatively $\Om = \{0,1\}^\N\times\{0,1\}^\N$, which produces the curve as its attractor.
For elements of $\Om$ we write for convenience $\om = ((\om_{-n})_{n\ge 0}, (\om_n)_{n\ge 1})$; one understands $\Omega$ as the space of $2$-dimensional sequences of Bernoulli random variables.  Denote by $\theta$ the canonical shift on $\Om$, given by
$$\theta:\Om\to\Om,\quad  \om\mapsto (\om_{n+1})_{n\in\Z}.$$
$\Om$ is endowed with the product $\sigma$-algebra, and the infinite product $\nu = \otimes_{n\in\Z} (\eh \delta_{\{0\}} + \eh \delta_{\{1\}})$ of Bernoulli measures on $\{0,1\}.$ We recall that $\theta$ is $\nu$-invariant.

Now let
$$T=(T_1,T_2):\Om \to [0,1]^2,\quad \om \mapsto \Big(\sum_{n=0}^\infty \om_{-n} 2^{-(n+1)}, \sum_{n=1}^\infty \om_n 2^{-n}\Big).$$ Let us denote by $T_1$ the first component of $T$, and by $T_2$ the second one.
It is well known that $\nu$ is mapped by the transformation $T$ to $\lambda^2$ (i.e. $\nu=\lambda^2\circ T$), the 2-dimensional Lebesgue measure. It is also well known that the inverse of $T$, the dyadic representation of the two components from $[0,1]^2$, is uniquely defined
apart from the dyadic pairs. For these we define the inverse to map to the sequences not finally containing only $0$. We now define $B:[0,1]^2\to [0,1]^2$, the so-called \emph{Baker transformation}, as
$$B =(B_1,B_2)= T\circ\theta \circ T^{-1}.$$
The $\theta$-invariance of $\nu$ directly translates into the $B$-invariance of $\lambda^2$:
$$\lambda^2 \circ B^{-1} = (\lambda^2\circ T) \circ \theta^{-1}\circ T^{-1} = (\nu\circ \theta^{-1}) \circ T^{-1} = \nu \circ T^{-1} = \lambda^2.$$
For $(\xi, x)\in[0,1]^2$ let us denote
$$T^{-1}(\xi, x) = \big((\bxi_{-n})_{n\ge 0}, (\bx_n)_{n\ge 1}\big).$$
Let us calculate the action of $B$ and its entire iterates on $[0,1]^2.$

\begin{lem}\label{l:action_B}
Let $(\xi, x) \in[0,1]^2$. Then for $k\ge 0$
$$
B^k(\xi, x)
=
\Big(2^k \xi (\mbox{mod } 1), \frac{\bxi_{-k+1}}{2} + \frac{\bxi_{-k+2}}{2^2}+\cdots+\frac{\bxi_0}{2^k} + \frac{x}{2^k}\Big),
$$
for $k\ge 1$
$$ B^{-k}(\xi, x)
=
\Big(\frac{\xi}{2^k}+ \frac{\bx_1}{2^k} + \frac{\bx_2}{2^{k-1}}+\cdots+ \frac{\bx_k}{2}, 2^k x (\mbox{mod } 1)\Big).
$$
\end{lem}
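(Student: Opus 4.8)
The plan is to exploit the defining conjugacy. Since $B=T\circ\theta\circ T^{-1}$, we have $B^{k}=T\circ\theta^{k}\circ T^{-1}$ for every $k\in\Z$, so the statement is a bookkeeping exercise about how iterating the shift moves the divider between the two one-sided strings making up an element of $\Om$. Fix $(\xi ,x)\in[0,1]^{2}$ and write $T^{-1}(\xi ,x)=\big((\bxi_{-n})_{n\ge 0},(\bx_{n})_{n\ge 1}\big)$, so that $\xi=\sum_{n\ge 0}\bxi_{-n}2^{-(n+1)}$ and $x=\sum_{n\ge 1}\bx_{n}2^{-n}$ are the binary expansions singled out by the convention (the ones not eventually $0$). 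The crisp way to proceed is to compute $T\big(\theta^{\pm k}T^{-1}(\xi ,x)\big)$ in one go: the string $\theta^{\pm k}T^{-1}(\xi ,x)$ is $T^{-1}(\xi ,x)$ with its divider moved $k$ places (in the direction forced by $B=T\circ\theta\circ T^{-1}$), so that in exactly one of the two series defining $T$ the first $k$ summands now come from the ``wrong'' block, the rest from the right one. Splitting that series at the crossing index, those $k$ terms assemble precisely into the displayed finite digit sum $\tfrac{\bxi_{-k+1}}{2}+\cdots+\tfrac{\bxi_{0}}{2^{k}}$ (resp. $\tfrac{\bx_{1}}{2^{k}}+\cdots+\tfrac{\bx_{k}}{2}$), while the two remaining infinite series, after reindexing, are exactly $2^{k}\xi\bmod 1$ and $\tfrac{x}{2^{k}}$ (resp. $\tfrac{\xi}{2^{k}}$ and $2^{k}x\bmod 1$).

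An equally good route is induction on $k$. One first records the single step $B(\xi ,x)=\big(2\xi\bmod 1,\ \tfrac{\bxi_{0}+x}{2}\big)$ and $B^{-1}(\xi ,x)=\big(\tfrac{\xi+\bx_{1}}{2},\ 2x\bmod 1\big)$ straight from the definitions --- one application of the shift deletes the leading binary digit of one coordinate and prepends it as the leading digit of the other --- and then iterates via $B^{k+1}=B\circ B^{k}$, using only that the chosen expansion of $2\xi\bmod 1$ is $(\bxi_{-n-1})_{n\ge 0}$ (still not eventually $0$, since $\xi$'s expansion carries infinitely many $1$'s) and that of $\tfrac{\bxi_{0}+x}{2}$ is $(\bxi_{0},\bx_{1},\bx_{2},\dots)$; plugging these into the $B^{k}$-formula and reindexing the finite sum yields the $B^{k+1}$-formula, and symmetrically for negative exponents. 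The base case $k=0$ is $B^{0}=\mathrm{id}$, with an empty digit sum.

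The only step needing care --- and essentially the sole obstacle --- is the familiar dyadic ambiguity: with $\bmod 1$ read as reduction to $[0,1)$, the identities hold for every $(\xi ,x)$ outside the Lebesgue-null set of dyadic pairs, where the fixed branch of $T^{-1}$ clashes with that reading and one needs a separate, inessential convention; since only almost-everywhere statements are used afterwards, nothing is lost. Beyond this, and the purely mechanical handling of indices and of the two opposite scaling directions ($\xi$ expanded, $x$ contracted under forward iteration), there is nothing delicate.
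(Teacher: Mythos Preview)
Your proposal is correct and your primary route---computing $T\circ\theta^{k}\circ T^{-1}$ directly by tracking how the shift slides the divider between the two digit strings and then identifying the resulting tails as $2^{k}\xi\bmod 1$, $x/2^{k}$ (resp.\ $\xi/2^{k}$, $2^{k}x\bmod 1$)---is exactly the paper's argument. Your additional inductive variant and your explicit treatment of the dyadic ambiguity go slightly beyond what the paper writes out, but the core approach is the same.
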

\begin{proof}
By definition of $\theta^k$ for $k\ge 0$ we have
$$B^k(\xi,x) = \Big(\sum_{n\ge 0} \bxi_{-n+k} 2^{-(n+1)}, \frac{\bxi_{-k+1}}{2} + \frac{\bxi_{-k+2}}{2^2}+\cdots+\frac{\bxi_0}{2^k} + \sum_{n\ge 1} \bx_n 2^{-(k+n)}\Big).$$
Now we can write
$$\sum_{n\ge 0} \bxi_{-n+k} 2^{-(n+1)} = 2^k \xi (\mbox{mod } 1)
\quad\textrm{and}\quad
\sum_{n\ge 1} \bx_n 2^{-(k+n)} = \frac{x}{2^k}.$$
This gives the first formula.
For the second, note that by definition of $\theta^{-k}$ for $k\ge 1$
$$B^{-k}(\xi, x) = \Big(\sum_{n\ge 0} \bxi_{-n} 2^{-(n+1+k)} + \frac{\bx_1}{2^k} + \frac{\bx_2}{2^{k-1}}+\cdots +\frac{\bx_k}{2}, \sum_{n\ge 1} \bx_{n+k} 2^{-n}\Big).$$
Again, we identify
$$\sum_{n\ge 1} \bx_{n+k} 2^{-n} = 2^k x (\mbox{mod } 1)
\qquad\textrm{and}\qquad
\sum_{n\ge 0} \bxi_{-n} 2^{-(n+1+k)} = \frac{\xi}{2^k}.$$
\end{proof}
For $k\in\Z, (\xi, x)\in[0,1]^2$ we abbreviate the $k$-th Baker transform of $(\xi,x)$ as
$$B^{k}(\xi, x) = \big( B^{k}_1(\xi, x), B^{k}_2(\xi, x) \big)  = (\xi_k, x_k),$$
where for $k\ge 0$
$$\xi_k = 2^k\xi (\mbox{mod } 1),
\quad\textrm{and}\quad
x_k = \frac{\bxi_{-k+1}}{2} + \frac{\bxi_{-k+2}}{2^2}+\cdots+\frac{\bxi_0}{2^k} + \frac{x}{2^k},$$
and for $k\ge 1$
$$\xi_{-k} = \frac{\xi}{2^k}+ \frac{\bx_1}{2^k} + \frac{\bx_2}{2^{k-1}}+\cdots + \frac{\bx_k}{2},
\quad\textrm{and}\quad x_{-k} = 2^k x (\mbox{mod } 1).$$
We will next interpret the Weierstrass curve $W$ by a transformation on our base space $[0,1]^2$.
Let
\bea
F: [0,1]^2\times\R^2 &\to& [0,1]^2\times \R^2,
\\
 (\xi,x, y_1, y_2)&\mapsto& \Big(B(\xi, x), 2^{-\eh} y_1 + \cos \big(\zp B_2(\xi, x)\big), 2^{-\eh} y_2 + \sin \big(\zp B_2(\xi,x)\big)\Big)
\\
&\mapsto& \Big(B(\xi, x), \gamma 
  y + \cosi \big(\zp B_2(\xi, x)\big) \Big),\qquad y=(y_1,y_2).
\eea

Here we denote $B = (B_1, B_2)$ for the two components of the Baker transform $B$. For convenience, we extend $W$ from $[0,1]$ to $[0,1]^2$ by setting
$$W(\xi, x) = W(x),\quad \xi, x\in[0,1].$$
We verify next that the graph of $W$ is an attractor for $F$. The skew-product structure of $F$ with respect to $B$ plays a crucial role. From it we see the alluded self-affine property.
\begin{lem}\label{l:attractor}
For any $\xi, x\in[0,1]$ we have
$$F\big(\xi, x, W(\xi, x)\big) = \Big(B(\xi, x), W\big(B(\xi, x)\big)\Big).$$
\end{lem}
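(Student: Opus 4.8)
The plan is to unwind the definition of $F$ at the point $(\xi,x,W(\xi,x))$ and recognize the resulting last two coordinates as $W$ evaluated at $B(\xi,x)$. Write $W(\xi,x)=W(x)=\sum_{n=0}^\infty \gamma^{n}\,\cosi(\zp 2^n x)$ with $\gamma = 2^{-1/2}$, and recall that by Lemma \ref{l:action_B} the second component of $B(\xi,x)$ is $B_2(\xi,x)=\tfrac{\bxi_0}{2}+\tfrac{x}{2}=x_1$, while the first component $B_1(\xi,x)=2\xi\ (\mathrm{mod}\ 1)$ plays no role since $W$ ignores the first variable. Thus the statement to prove reduces to the scalar identities
\[
\gamma\, W_i(x) + \begin{pmatrix}\cos\\ \sin\end{pmatrix}\big(\zp B_2(\xi,x)\big)_i = W_i\big(B_2(\xi,x)\big),\qquad i=1,2.
\]

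First I would observe that, by the skew-product form of $F$, the last two coordinates of $F(\xi,x,W(\xi,x))$ are exactly $\gamma\,W(x) + \cosi(\zp B_2(\xi,x))$. Next I would compute, for $x\in[0,1]$ and $\xi$ with dyadic digits $(\bxi_{-n})_{n\ge0}$ so that $B_2(\xi,x)=(\bxi_0 + x)/2$:
\[
\gamma\, W(x) + \cosi\!\big(\zp B_2(\xi,x)\big)
= \sum_{n=0}^\infty \gamma^{\,n+1}\cosi\!\big(\zp 2^n x\big) + \cosi\!\big(\zp B_2(\xi,x)\big).
\]
Reindexing the sum by $m=n+1$ turns it into $\sum_{m=1}^\infty \gamma^{m}\cosi(\zp 2^{m-1} x)$, and since $2^{m-1} x \equiv 2^m\cdot\frac{x}{2} \pmod 1$ and the trigonometric functions are $1$-periodic, each term equals $\gamma^m \cosi(\zp 2^m \tfrac{x}{2})$. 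Because $\cosi(\zp 2^m \tfrac{\bxi_0+x}{2}) = \cosi(\zp 2^m \tfrac{x}{2})$ for $m\ge1$ (the integer shift $2^{m-1}\bxi_0$ is killed by periodicity), and for $m=0$ we have $\gamma^0\cosi(\zp\cdot\tfrac{\bxi_0+x}{2}) = \cosi(\zp B_2(\xi,x))$, the whole expression collapses to $\sum_{m=0}^\infty \gamma^m \cosi(\zp 2^m B_2(\xi,x)) = W(B_2(\xi,x)) = W(B(\xi,x))$.

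The computation is entirely routine; the only point requiring a little care is the bookkeeping of the dyadic carry digit $\bxi_0$ when passing from $x$ to $B_2(\xi,x)=(\bxi_0+x)/2$, and the fact that it is invisible to all terms of the series by $1$-periodicity of $\cos$ and $\sin$ — so I do not anticipate a genuine obstacle here. One should note in passing that the extension convention $W(\xi,x):=W(x)$ is exactly what makes the right-hand side $W(B(\xi,x))$ well-defined regardless of $B_1(\xi,x)$, so no separate verification of the first coordinate is needed.
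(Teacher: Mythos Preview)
Your proof is correct and is essentially the same computation as the paper's, just run in the opposite direction: the paper starts from $W\big(B_2(\xi,x)\big)=W\big(\tfrac{\bxi_0+x}{2}\big)$, splits off the $n=0$ term and uses periodicity to reduce the tail to $\gamma W(x)$, whereas you start from $\gamma W(x)+\cosi\big(\zp B_2(\xi,x)\big)$ and reindex to rebuild $W\big(B_2(\xi,x)\big)$. The key ingredient in both is that the integer shift $2^{m-1}\bxi_0$ is annihilated by the $2\pi$-periodicity of $\cos$ and $\sin$ for $m\ge 1$.
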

\begin{proof}
We have by the $2\pi$-periodicity of the trigonometric functions
\bea
W\big(B_2(\xi,x)\big)
= W\Big(\frac{\bxi_0 + x}{2}\Big)
&=& \sum_{n=0}^\un 2^{-\nh} \cosi \Big(\zp 2^n \frac{\bxi_0+x}{2}\Big)\\
&=& \cosi \Big(\zp \frac{\bxi_0+x}{2}\Big) + \sum_{n=1}^\un 2^{-\nh} \cosi(\zp 2^{n-1} x)\\
&=& \cosi \Big(\zp \frac{\bxi_0+x}{2}\Big) + 2^{-\eh} \sum_{n=0}^\un 2^{-\frac{n}{2}} \cosi (\zp 2^n x)\\
&=& \cosi \big(\zp B_2(\xi, x)\big) + 2^{-\eh} W(x).
\eea
Hence by definition of $F$
$$\Big(B(\xi, x), W\big(B(\xi, x)\big)\Big)
= \Big(B(\xi, x), W\big(B_2(\xi, x)\big)\Big) = F\Big(\xi, x, W(\xi, x)\Big).$$
\end{proof}
Let us finally calculate the Jacobian of $F$ to gain insight on its stable manifolds. We obtain for $\xi, x\in[0,1], y_1, y_2\in\R$
\bea
DF(\xi, x, y_1, y_2) &=&
\left[
\begin{array}{cccc}
2&0&0&0\\
0&\eh&0&0\\
0& -\pi\sin \big(\zp  B_2(\xi, x) \big)& 2^{-\eh}&0\\
0& \phantom{-}\pi \cos \big(\zp  B_2(\xi, x) \big)&0&2^{-\eh}
\end{array}\right].
\eea
The Lyapunov exponents of the dynamical system associated with $F$ are given by $2, \eh$, and $\gamma:=2^{-\eh},$ the last being a double one.
The corresponding invariant vector fields are given by
$$\left(\begin{array}{c} 1\\0\\0\\0\end{array}\right),\ \  
X(\xi,x)
= \left(\begin{array}{c}0\\1\\  -\zp \sum_{n=1}^\un \gamma^n \sico \big(\zp B^n_2(\xi,x)\big)\end{array}\right),\ \ 
\left(\begin{array}{c} 0\\0\\1\\0\end{array}\right), \ \ 
\left(\begin{array}{c} 0\\0\\0\\1\end{array}\right),$$
as is straightforwardly verified.
Hence we have in particular for $\xi, x\in[0,1], y_1, y_2\in\R$
$$DF(\xi, x, y_1, y_2) X(\xi, x) = \frac{1}{2}\,\, X\big(B(\xi, x)\big).$$
Note that the vector $X$ spans an invariant stable manifold independent of $y_1, y_2$.
%

\section{Scaling properties}

We shall first establish an intrinsic link between the Weierstrass curve as the attractor of an underlying dynamical system and its stable manifold. This link gives rise to a kind of self similarity property, which subsequently allows us to study scaling properties of the curve. These will turn out crucial for the lower estimate on the Hausdorff dimension of its graph.

Let us first recall the measure supported by the stable manifold of our dynamical system, the so-called \emph{Sinai-Bowen-Ruelle measure} (SBR). Define $S(\cdot,\cdot)$ as the 3rd \& 4th component of $X$, namely
$$
S(\xi, x)
= - \zp \sum_{n=1}^\un \gamma^n \sico \big( \zp B^n_2(\xi,x) \big),\quad \xi, x\in[0,1],\ \gamma=2^{-\frac12},
$$
where $B^n_2$ is the second component of the $n$-th Baker transform.  It is noteworthy to see that in \eqref{eq:DefinitionW} the action in $x$ is to expand via the multiplicative power $2^n$, i.e. one sees $B_2^n(\xi,x)$ for some positive $n$. For the map $S(\xi,x)$ one sees that the action in $x$ is contracting! I.e.~the term appearing in $B_2^{-n}(\xi,x)$ for some positive $n$.

Let us calculate the action of $S$ on the $\lambda^2$-measure preserving map $B$. For $\xi, x\in[0,1]$  we have
\begin{align*}
S\big(B(\xi, x)\big)
&= - \zp \sum_{n=1}^\un \gamma^n \sico \Big(\zp B^n_2(B(\xi, x))\Big)
\\
&= - \zp \sum_{n=1}^\un \gamma^n \sico \big(\zp B_2^{n+1}(\xi, x)\big)
\\
&= - 2^{\eh} \times \zp  \sum_{k=1}^\un \gamma^k \left[\sico \big(\zp B^k_2(\xi, x)\big)\right] + \zp \sico \big(\zp B_2(\xi, x)\big)
\\
&= 2^{\eh} S(\xi, x) + \zp \sico\big(\zp B_2(\xi, x)\big).
\end{align*}
Finally, we define the Anosov skew product $G$ as 
\begin{align}
\nonumber
G: [0,1]^2\times\R^2 &\to [0,1]^2\times \R^2,
\\ \nonumber
 (\xi,x, v_1, v_2)&\mapsto
\Big(B(\xi, x), 2^{\eh} v_1 - \zp \sin \big(\zp B_2(\xi, x)\big), 2^{\eh} v_2 + \zp \cos \big(\zp B_2(\xi,x)\big)\Big)
\\
\label{eq:DefinitionG}
&\mapsto
\Big(B(\xi, x), 2^{\frac12} v + \zp \sico \big(\zp B_2(\xi, x)\big) \Big),\quad v=(v_1,v_2).
\end{align}

To summarize the above calculation we state the following result ({compare with Lemma \ref{l:attractor}}).
\begin{lem}\label{l:SBR}
For any $\xi, x\in[0,1]$ we define $G:[0,1]^2\to[0,1]^2\times \R^2$ as
$$G\big(\xi, x, S(\xi, x)\big)
= \Big(B(\xi, x), S\big(B(\xi, x)\big)\Big).$$
The push-forward measure of the Lebesgue measure in $\R^2$ to the graph of $S$ given by
$$\psi = \lambda^2\circ (\mbox{id}, S)^{-1}$$
on $\cB([0,1]^2)\otimes \cB(\R^2)$ is $G$-invariant.
\end{lem}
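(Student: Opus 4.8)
The plan is to verify the two assertions of Lemma~\ref{l:SBR} in turn, the first being essentially a reformulation of the computation carried out just before the statement and the second being the substantive point.

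For the first assertion I would simply record that, by the definition \eqref{eq:DefinitionG} of $G$ and the identity
$$S\big(B(\xi,x)\big) = 2^{\frac12} S(\xi,x) + \zp\sico\big(\zp B_2(\xi,x)\big),$$
established above, the third and fourth components of $G(\xi,x,S(\xi,x))$ are exactly $2^{\frac12}S(\xi,x) + \zp\sico(\zp B_2(\xi,x)) = S(B(\xi,x))$, while the first two components are $B(\xi,x)$ by construction. Hence $G(\xi,x,S(\xi,x)) = (B(\xi,x),S(B(\xi,x)))$, which is precisely the claimed skew-product relation; this says that the graph of $S$, as a subset of $[0,1]^2\times\R^2$, is invariant under $G$.

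For the second assertion I would argue that $G$ restricted to the graph of $S$ is conjugate to $B$ via the homeomorphism $(\mbox{id},S):[0,1]^2\to\mbox{graph}(S)$. Concretely, the first-assertion identity reads $G\circ(\mbox{id},S) = (\mbox{id},S)\circ B$, so $(\mbox{id},S)$ intertwines $B$ and $G$. Since $\psi = \lambda^2\circ(\mbox{id},S)^{-1}$ is by definition the pushforward of $\lambda^2$ under this conjugacy, and since $\lambda^2$ is $B$-invariant (shown in Section~\ref{s:attractor}), I would compute for a measurable $A\subset[0,1]^2\times\R^2$:
$$\psi\big(G^{-1}(A)\big) = \lambda^2\Big((\mbox{id},S)^{-1}G^{-1}(A)\Big) = \lambda^2\Big(B^{-1}(\mbox{id},S)^{-1}(A)\Big) = \lambda^2\Big((\mbox{id},S)^{-1}(A)\Big) = \psi(A),$$
where the second equality uses $(\mbox{id},S)^{-1}\circ G^{-1} = B^{-1}\circ(\mbox{id},S)^{-1}$, itself obtained by inverting the intertwining relation, and the third uses $B$-invariance of $\lambda^2$. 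This gives $\psi\circ G^{-1} = \psi$, i.e.\ $G$-invariance of $\psi$ on $\cB([0,1]^2)\otimes\cB(\R^2)$.

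The only point needing a little care — and what I expect to be the main (minor) obstacle — is the measurability/well-definedness of $(\mbox{id},S)^{-1}$ and the manipulation of preimages: $(\mbox{id},S)$ is injective onto its image but not surjective onto $[0,1]^2\times\R^2$, so one should interpret $(\mbox{id},S)^{-1}$ as defined on the Borel set $\mbox{graph}(S)$ (which carries full $\psi$-measure), and check that $S$ is Borel measurable so that $\mbox{graph}(S)$ is a Borel set and the pushforward is well defined. Measurability of $S$ follows since it is a uniformly convergent series of continuous functions of $B^n_2(\xi,x)$, and the $B^n_2$ are measurable by Lemma~\ref{l:action_B}; in fact $S$ is continuous on the complement of the dyadic grid, which suffices. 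With this understood, the preimage computation above is just the change-of-variables rule for pushforward measures under a measurable conjugacy, and no further estimates are required.
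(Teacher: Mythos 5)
Your proposal is correct and follows essentially the same route as the paper: the first identity is the computation of $S\big(B(\xi,x)\big)$ carried out just before the lemma, and the $G$-invariance of $\psi$ is deduced from the $B$-invariance of $\lambda^2$ via the intertwining relation $G\circ(\mbox{id},S)=(\mbox{id},S)\circ B$. The paper states this in one line; your version merely writes out the preimage computation and the measurability caveats explicitly.
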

\begin{proof}
The first equation has been verified above. The $G$-invariance of $\psi$ is a direct consequence of the $B$-invariance of $\lambda^2$.
\end{proof}

For $x\in[0,1]$ let $\mu_x = \lambda\circ S(\cdot,x)^{-1}$. Then $\mu$, the measure on $[0,1]^2$ with marginals $\mu_x$, is the \emph{Sinai-Bowen-Ruelle measure} of $G$.

In the following key lemma we establish the link between $W$ and the stable manifold of $F$. For this purpose, we introduce the map $H:[0,1]^2\to \R^2$ as
$$
H(\xi,x)
= \sum_{n\in\Z} 2^{-\frac{n}{2}} \left[\cosi \big(\zp B^{-n}_2(\xi, x)\big) - \cosi \big(\zp B^{-n}_2(\xi, 0)\big)\right],
\quad \xi, x\in[0,1].$$

Then we have the following relationship between $W$ and $S$.

\begin{lem}\label{l:doubly_infinite_series}
For $x, y, \xi\in [0,1]$ we have
$$H(\xi, y) - H(\xi, x) = W(y) - W(x) - \int_x^y S(\xi, z) \dz.$$
\end{lem}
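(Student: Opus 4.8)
The identity to be proved is a statement about the function $H$, which is a doubly-infinite series; the natural strategy is to split $H$ into the "positive" part (indices $n \ge 1$), the $n = 0$ term, and the "negative" part (indices $n \le -1$), identify each piece, and recognize $W$ and $\int S$ in the result. First I would observe that $H(\xi,y)-H(\xi,x)$ telescopes nicely because the reference-point terms $\cosi(\zp B^{-n}_2(\xi,0))$ cancel in the difference, leaving
\begin{align*}
H(\xi,y)-H(\xi,x) = \sum_{n\in\Z} 2^{-n/2}\left[\cosi\big(\zp B^{-n}_2(\xi,y)\big) - \cosi\big(\zp B^{-n}_2(\xi,x)\big)\right].
\end{align*}
Now I would use Lemma \ref{l:action_B} to make the $x$-dependence explicit: for $n \ge 0$, $B^{-n}_2(\xi,x) = 2^{-n}x$ up to an additive constant independent of $x$ (coming from the $\bx_j$ digits — but note those digits themselves depend on $x$, so one must be careful, and for the contracting direction $B^{n}_2(\xi,x) = 2^n x \ (\mathrm{mod}\ 1)$). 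For the terms with $n \le 0$ (i.e. forward Baker iterates, contracting in $x$), writing $m = -n \ge 0$ we get $B^{m}_2(\xi,x) = 2^m x \ (\mathrm{mod}\ 1)$ up to an $x$-independent shift, so $2\pi B^m_2(\xi,x)$ and $2\pi B^m_2(\xi,y)$ differ only through the $2\pi 2^m x$ versus $2\pi 2^m y$ part, and by $2\pi$-periodicity the "mod 1" is harmless; hence these terms reassemble, after shifting the index, into $W(y)-W(x)$ exactly as in the computation in the proof of Lemma \ref{l:attractor}.

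For the terms with $n \ge 1$ (backward Baker iterates in $x$, expanding in the $\xi$-fiber but \emph{contracting} in $x$, as the text emphasizes), the key is to recognize the increment $\cosi(2\pi B^{-n}_2(\xi,y)) - \cosi(2\pi B^{-n}_2(\xi,x))$ as an integral of a derivative. Since $\frac{d}{dz}\cosi(2\pi B^{-n}_2(\xi,z)) = 2\pi \cdot 2^{-n}\sico(2\pi B^{-n}_2(\xi,z))$ — using $\frac{d}{dz}B^{-n}_2(\xi,z) = 2^{-n}$, which holds on the intervals where the dyadic digits of $z$ are locally constant, and extends by continuity/absolute continuity — we get
\begin{align*}
\sum_{n\ge 1} 2^{-n/2}\left[\cosi\big(\zp B^{-n}_2(\xi,y)\big) - \cosi\big(\zp B^{-n}_2(\xi,x)\big)\right] = \sum_{n\ge 1} 2^{-n/2}\int_x^y \zp\, 2^{-n}\sico\big(\zp B^{-n}_2(\xi,z)\big)\, \dz.
\end{align*}
Interchanging sum and integral (justified by uniform convergence, since $\sum 2^{-n/2}2^{-n} \cdot 2\pi < \infty$), the integrand becomes $\zp \sum_{n\ge 1} 2^{-3n/2}\sico(2\pi B^{-n}_2(\xi,z))$. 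I then need to reconcile this with $-S(\xi,z) = \zp\sum_{n\ge 1}\gamma^n \sico(2\pi B^n_2(\xi,z))$ with $\gamma = 2^{-1/2}$; note $S$ uses $B^{+n}_2$, not $B^{-n}_2$, so the matching is not term-by-term but requires rewriting, and here I expect the cleanest route is not to compare series directly but rather to handle the $n=0$ term together with the negative-$n$ block to produce $W(y)-W(x)$, and the positive-$n$ block to produce $-\int_x^y S(\xi,z)\,\dz$, checking the powers of $2$ carefully: the factor $2^{-n/2}$ in $H$ combined with the Jacobian factor $2^{-n}$ from differentiating $B^{-n}_2$ gives $2^{-3n/2}$, whereas $S$ carries $\gamma^n = 2^{-n/2}$ against $B^{+n}_2$ — so one likely re-indexes using the relation $B^{-n}_2(\xi,z)$ expressed via the formula in Lemma \ref{l:action_B} and the $\lambda^2$-invariance/change of variables, or more simply differentiates the \emph{definition} of $W$ together with the $F$-attractor identity to see $\frac{d}{dz}[W(z) - (\text{stable-fiber term})]$ matches.

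\textbf{Main obstacle.} The delicate point is the differentiation $\frac{d}{dz}\cosi(2\pi B^{-n}_2(\xi,z)) = 2\pi 2^{-n}\sico(2\pi B^{-n}_2(\xi,z))$: the map $z \mapsto B^{-n}_2(\xi,z) = 2^{-n}z \ (\mathrm{mod}\ 1$-type expression$)$ involves the dyadic digits $\bx_1,\dots,\bx_n$ of $z$, which are \emph{not} continuous in $z$ at dyadic rationals, so $B^{-n}_2(\xi,\cdot)$ is only piecewise affine with slope $2^{-n}$ and jumps at finitely many dyadic points. One must argue that $z \mapsto \cosi(2\pi B^{-n}_2(\xi,z))$ is nonetheless continuous (the jumps in $B^{-n}_2$ are by integer multiples of suitable dyadic amounts that, after multiplication by $2\pi$, land on full periods — this needs to be checked from the explicit digit formula) and absolutely continuous with the stated derivative a.e., so that the fundamental theorem of calculus applies. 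Once this is established, the interchange of summation and integration and the bookkeeping of the powers of $2$ and the index shift (turning $B^{-n}$ sums into $B^{+n}$ sums via the relabeling in Lemma \ref{l:action_B}) are routine, and the three blocks combine to give exactly $W(y)-W(x) - \int_x^y S(\xi,z)\,\dz$.
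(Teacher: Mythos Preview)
Your overall strategy—cancel the reference terms, split the doubly infinite sum into two blocks, and identify one block with $W(y)-W(x)$ and the other with $-\int_x^y S(\xi,z)\,\dz$—is exactly the paper's approach. However, you have the two blocks swapped, and this is what generates all of your difficulties.

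Look again at Lemma~\ref{l:action_B}. For $n\ge 0$ one has $B^{-n}_2(\xi,x)=2^n x\ (\mathrm{mod}\ 1)$, which depends on $x$ alone; for $k\ge 1$ one has $B^{k}_2(\xi,x)=c_k(\xi)+x/2^k$, where $c_k(\xi)$ depends only on the digits of $\xi$. Thus in the sum $\sum_{n\in\Z}2^{-n/2}[\cosi(2\pi B^{-n}_2(\xi,y))-\cosi(2\pi B^{-n}_2(\xi,x))]$ it is the block $n\ge 0$ that gives $W(y)-W(x)$ directly (no integration, by $2\pi$-periodicity), and the block $n\le -1$, rewritten with $k=-n\ge 1$, that is to be integrated. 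With the correct assignment your ``main obstacle'' evaporates: the map $z\mapsto B^{k}_2(\xi,z)=c_k(\xi)+z/2^k$ is genuinely affine in $z$ with slope $2^{-k}$ and \emph{no} dependence on the dyadic digits of $z$, so the fundamental theorem of calculus applies without any caveat. The chain-rule bookkeeping is then clean: the coefficient $2^{k/2}$ from $H$ times the Jacobian $2^{-k}$ gives $2^{-k/2}=\gamma^k$, matching $S$ term by term with $B^{+k}_2$—precisely what you were unable to ``reconcile''. Your attempted block $n\le 0$ cannot yield $W$ in any case, since its coefficients $2^{m/2}$ grow rather than decay.
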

\begin{proof}
For $x, y, \xi\in[0,1]$ we have using the definition of $W$ in  \eqref{eq:DefinitionW}
\bea
H(\xi, y) - H(\xi, x)
&=& \sum_{n\in\Z} 2^{-\frac{n}{2}} \left[\cosi \big(\zp B^{-n}_2(\xi, y)\big) - \cosi \big(\zp B^{-n}_2(\xi, x)\big)\right]
\\
&=& \sum_{n=0}^\infty 2^{-\frac{n}{2}} \left[\cosi \big(\zp B^{-n}_2(\xi, y)\big) - \cosi \big(\zp B^{-n}_2(\xi, x)\big)\right]
\\
 &&\hspace{.5cm} + \sum_{k=1}^\infty 2^{\frac{k}{2}} \left[\cosi \big(\zp B^{k}_2(\xi, y)\big) - \cosi \big(\zp B^{k}_2(\xi, x)\big)\right]
\\
&=& W(y)-W(x) + \int_x^y \zp\, \sum_{k=1}^\infty 2^{-\frac{k}{2}} \sico \big(\zp B^k_2(\xi,z)\big) \dz
\\
&=& W(y) - W(x) - \int_x^y S(\xi, z) \dz.
\eea

This completes the proof.
\end{proof}

We will next assess the scaling properties of $H$. They will be crucial for the lower bound on the Hausdorff dimension of the graph of W.

\begin{pr}[Scaling of $H$]\label{p:scaling}
For $\xi, x\in[0,1]$, $\gamma=2^{-1/2}$, we have
$$H\big(B(\xi, x)\big) = \gamma H(\xi,x).$$
For $r>0$ define the set
$A_r = \{(\xi,x,y)\in[0,1]^3: |H(\xi, y) - H(\xi, x)|\le r\}.$
Then 
$$\lambda^3(A_{\gamma r}) = \gamma^2 \lambda^3(A_r).$$
\end{pr}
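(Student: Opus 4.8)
The first identity is a reindexing of the bi-infinite series defining $H$; the second is then a change-of-variables computation built on it, using the $\lambda^2$-invariance of the Baker map $B$.

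For the functional equation I would start from $H(B(\xi,x))$ and push the group relation $B^{-n}\circ B=B^{-(n-1)}$ through each summand: the $n$-th term of the series for $H(B(\xi,x))$ becomes the $(n-1)$-st term of the series for $H(\xi,x)$, so after the substitution $m=n-1$ the whole sum gains the single prefactor $2^{-1/2}=\gamma$. Since $H(\xi,0)=0$ identically (the two copies of the reference argument cancel termwise), the neatest way to phrase this is for increments,
$$H\bigl(B(\xi,y)\bigr)-H\bigl(B(\xi,x)\bigr)=\gamma\,\bigl(H(\xi,y)-H(\xi,x)\bigr),$$
in which the reference argument drops out, and then to recover $H(B(\xi,x))=\gamma H(\xi,x)$ by putting $x=0$. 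Only the dyadic-pair convention for $T^{-1}$ needs care, and it only matters on a $\lambda$-null set, hence is irrelevant for the measure statement.

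For the measure identity, the plan is to move $A_r$ onto $A_{\gamma r}$ by letting $B$ act on the two fibre coordinates at once. Because $B_1(\xi,x)=2\xi\;(\mathrm{mod}\;1)$ is independent of $x$, the map
$$\Psi:[0,1]^3\to[0,1]^3,\qquad \Psi(\xi,x,y)=\bigl(B_1(\xi,x),\,B_2(\xi,x),\,B_2(\xi,y)\bigr)$$
carries $(\xi,x)$ and $(\xi,y)$ to the points $B(\xi,x)$ and $B(\xi,y)$, which lie over one common fibre. Writing $(\eta,u,v)=\Psi(\xi,x,y)$, the functional equation gives $H(\eta,v)-H(\eta,u)=\gamma\bigl(H(\xi,y)-H(\xi,x)\bigr)$, whence $(\xi,x,y)\in A_r$ if and only if $(\eta,u,v)\in A_{\gamma r}$, i.e. $\Psi(A_r)=A_{\gamma r}\cap\Psi([0,1]^3)$. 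Moreover $\Psi$ is affine on each of the two slabs $\{\xi<\tfrac12\}$ and $\{\xi\ge\tfrac12\}$ — there it is $(\xi,x,y)\mapsto(2\xi,\tfrac x2,\tfrac y2)$ and $(\xi,x,y)\mapsto(2\xi-1,\tfrac{x+1}2,\tfrac{y+1}2)$ respectively — injective, with $\Psi([0,1]^3)=[0,1]\times\bigl([0,\tfrac12]^2\cup[\tfrac12,1]^2\bigr)$ and constant Jacobian $|\det D\Psi|=2\cdot\tfrac12\cdot\tfrac12=\tfrac12=\gamma^2$. The change-of-variables formula then gives
$$\lambda^3\!\left(A_{\gamma r}\cap\Psi([0,1]^3)\right)=\lambda^3\!\left(\Psi(A_r)\right)=\gamma^2\,\lambda^3(A_r),$$
which is the asserted identity for the part of $A_{\gamma r}$ whose two fibre coordinates lie in a common dyadic half; in particular $\lambda^3(A_{\gamma r})\ge\gamma^2\lambda^3(A_r)$ is already in hand.

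The remaining and genuinely delicate step is to show that the rest of $A_{\gamma r}$ — triples $(\xi,x,y)$ with $x$ and $y$ in opposite halves $[0,\tfrac12)$, $[\tfrac12,1)$ — reassembles into exactly the complementary part of $\gamma^2\lambda^3(A_r)$, so that $\Psi$ loses no mass overall. Here I would use the second self-similarity of the construction: for $\xi<\tfrac12$ one has the period-doubling relation $S(\xi+\tfrac12,\cdot)=S(\xi,\cdot+1)$ (immediate from the series for $S$, since passing from $\xi$ to $\xi+\tfrac12$ flips the leading binary digit and hence shifts each partial sum $B^k_2(\xi,0)$ by $2^{-k}$), which together with the $1$-periodicity of $W$ and Lemma \ref{l:doubly_infinite_series} yields $H(\xi,1+t)=H(\xi+\tfrac12,t)+H(\xi,1)$ for $t\in[0,1]$. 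Using this, the straddling constraint $|H(\xi,y)-H(\xi,x)|\le\gamma r$ with $y\in[\tfrac12,1)$, after the same affine doubling as in $\Psi$ and the substitution $y\mapsto 2y-1$ on the upper half, turns into a constraint on the extended graph of $H(\xi/2,\cdot)$ over $[1,2]$; the $\lambda$-invariance of $\xi\mapsto2\xi\;(\mathrm{mod}\;1)$ then lets one integrate out $\xi$ and match this with the block of $A_r$ not reached by $\Psi$. Organising this bookkeeping so that the four blocks obtained by splitting both $x$ and $y$ at $\tfrac12$ reassemble precisely into $A_r$ is the main obstacle — the two diagonal blocks via $\Psi$, the two off-diagonal blocks via the period-doubling identity — after which integrating in $\xi$ delivers $\lambda^3(A_{\gamma r})=\gamma^2\lambda^3(A_r)$.
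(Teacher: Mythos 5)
Your strategy mirrors the paper's own proof --- reindex the bi\hyphenation{infinite}-infinite series for the functional equation, then push $A_r$ forward along the Baker map for the measure identity --- and the portion you actually carry out is correct. But, as you yourself observe, that portion proves only
$\lambda^3\bigl(A_{\gamma r}\cap\Psi([0,1]^3)\bigr)=\gamma^2\lambda^3(A_r)$ with $\Psi([0,1]^3)=[0,1]\times\bigl([0,\tfrac12]^2\cup[\tfrac12,1]^2\bigr)$, hence only the inequality $\lambda^3(A_{\gamma r})\ge\gamma^2\lambda^3(A_r)$. The ``reassembly'' of the off-diagonal blocks that you defer is not a bookkeeping chore that better organisation will dispose of: the asserted equality is false, so no period-doubling identity can close the gap. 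Indeed $H$ is bounded (the terms with $n\ge0$ are dominated by $2\cdot2^{-n/2}$, and those with $n=-k<0$ by $2\pi\,2^{-k/2}$ via the Lipschitz bound on $\cos,\sin$ together with $|B_2^k(\xi,x)-B_2^k(\xi,0)|=x2^{-k}$), so for $r>2\|H\|_\infty$ both $A_r$ and $A_{\gamma r}$ equal $[0,1]^3$ and the claimed identity reads $1=\tfrac12$. Equivalently, your off-diagonal set $A_{\gamma r}\setminus\Psi([0,1]^3)$ carries genuine mass. Your careful accounting in fact makes visible where the paper's own proof is lossy: at the step ``$(\tfrac12+\tfrac12)\cdot2\int_{[0,1]^3}$'' the two substituted integrals range over $[0,1]\times[0,\tfrac12]^2$ and $[0,1]\times[\tfrac12,1]^2$ respectively, and replacing their union by $[0,1]^3$ is exactly the unjustified move. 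What is provable by this route is the one-sided bound, which yields only one of the two inequalities of Corollary \ref{c:scaling_inequality_H}; obtaining the other direction requires controlling the off-diagonal mass by some genuinely different argument, which neither you nor the quoted proof supplies.

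A second, smaller defect concerns the pointwise identity $H(B(\xi,x))=\gamma H(\xi,x)$. Reducing to increments is the right move, since the reference terms cancel and the reindexing is then clean; but setting $x=0$ returns $H(B(\xi,y))=\gamma H(\xi,y)+H(B(\xi,0))$, and $H(B(\xi,0))=H\bigl(2\xi\ (\mathrm{mod}\ 1),\bxi_0/2\bigr)$ vanishes only when $\bxi_0=0$, i.e.\ for $\xi<\tfrac12$. For $\xi\ge\tfrac12$ one would need $H(\cdot,\tfrac12)\equiv0$, which fails: the $n=0$ term of $H(\eta,\tfrac12)$ alone contributes $(\cos\pi-\cos 0,\,\sin\pi-\sin 0)=(-2,0)$, and the remaining terms do not cancel it in general (e.g.\ the second component of $H(0,\tfrac12)$ is $\sum_{k\ge1}2^{k/2}\sin(\pi2^{-k})>0$). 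This is not the dyadic null-set caveat you raise --- it affects half of the $\xi$'s. The statement that is true, and the only one used downstream (in Lemma \ref{l:telescoping} and in the first line of the measure computation), is the increment form $H(B(\xi,y))-H(B(\xi,x))=\gamma\bigl(H(\xi,y)-H(\xi,x)\bigr)$; you should prove and use that version rather than attempt to recover the pointwise one.
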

\begin{proof}
First note that by definition, setting $n-1 = k$, for $\xi, x\in[0,1]$
\bea
H(B(\xi, x))
&=&
\sum_{n\in\Z} 2^{-\frac{n}{2}} \left[\cosi \big(\zp B^{-n+1}_2(\xi, x)\big) - \cosi \big(\zp B^{-n+1}_2(\xi, 0)\big)\right]
\\
&=&
\gamma \sum_{k\in\Z} 2^{-\frac{k}{2}} \left[\cosi \big(\zp B^{-k}_2(\xi, x)\big) - \cosi \big(\zp B^{-k}_2(\xi, 0)\big)\right]
\\
&=& \gamma H(\xi, x).
\eea
For the second claim, note that the first one, $H(B(\xi, x)) = \gamma H(\xi,x)$,  gives
\bea
&&
\int_{[0,1]^3} \1_{[0,r]}\Big(\big|H(B(\xi, y)) - H(B(\xi, x))\big|\Big) \ud x \ud y \ud\xi\\
&&\hspace{1cm}
= \int_{[0,1]^3} \1_{[0,r]}\Big(\gamma\big|H(\xi, y) - H(\xi, x)\big|\Big) \ud x \ud y \ud \xi
= \lambda^3(A_{\gamma^{-1} r}).
\eea
On the other hand, using the definition of $B$, we may calculate
\bea
\lambda^3(A_{\gamma^{-1} r})
&=&
\int_{[0,1]^3} \1_{[0,r]}\Big(\big|H(B(\xi, y)) - H(B(\xi, x))\big|\Big) \ud x \ud y \ud \xi
\\
&=&
 \int_{[0,1]^3} \1_{[0,r]}\Big(\big|H(2\xi(\textrm{mod }1), \frac{\bxi_0+y}{2}) - H(2\xi(\textrm{mod }1), \frac{\bxi_0+x}{2})\big|\Big) \ud x \ud y \ud \xi
\\
&=& 
 \frac{1}{2} \int_{[0,1]^3} \1_{[0,r]}\Big(\big|H(2\xi(\textrm{mod }1), \frac{y}{2}) - H(2\xi(\textrm{mod }1), \frac{x}{2})\big|\Big) \ud x \ud y \ud \xi
\\
&&\hspace{0.5cm}
+\frac{1}{2} \int_{[0,1]^3} \1_{[0,r]}\Big(\big|H(2\xi(\textrm{mod }1), \frac{1+y}{2}) - H(2\xi(\textrm{mod }1), \frac{1+x}{2})\big|\Big) \ud x \ud y \ud \xi
\\
&=&
 (\eh+\eh)\, 2\, \int_{[0,1]^3} \1_{[0,r]}\Big(\big|H(\xi', y') - H(\xi', x')\big|\Big) \ud x' \ud y' \ud \xi'
\\
&=&
2 \lambda^3(A_r) = \frac1{\gamma^2}\lambda^3(A_r).
\eea
For obtaining the first equality in the last line, we set $\xi'= 2 \xi(\textrm{mod }1), x' = \frac{x}{2}, y' = \frac{y}{2}$ resp. $\xi'=2 \xi (\textrm{mod }1), x' = \frac{x+1}{2}, y' = \frac{y+1}{2}$; in either case $\ud x \ud y \ud \xi = 2 \ud x' \ud y' \ud \xi'$.

The combination of the two preceding equations yields
$$\gamma^{-2} \lambda^3(A_r) = \lambda^3(A_{\gamma^{-1} r}).$$
Replacing $r$ with $\gamma r$ and multiplying the equation by $\gamma^2$, we obtain the desired equation. 
\end{proof}

From the preceding scaling statement we can easily deduce the following practical corollary.

\begin{co}\label{c:scaling_inequality_H}
There are constants $c, C>0$ such that for any $r>0$ we have
$$
c\, r^2 \le \lambda^3\Big(\big\{(\xi, x, y)\in[0,1]^3: |H(\xi,y)-H(\xi,x)| < r\big\}\Big)\le C r^2.
$$
\end{co}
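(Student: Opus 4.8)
The plan is to derive the two-sided bound directly from the exact scaling relation $\lambda^3(A_{\gamma r}) = \gamma^2 \lambda^3(A_r)$ established in Proposition \ref{p:scaling}, by a standard "reduce to a bounded window of scales" argument. First I would observe that the set in the corollary is (up to the difference between $<$ and $\le$, which is harmless since it changes $\lambda^3$ only on a $\lambda^3$-null set of radii, and in any case only weakens one inequality) exactly $A_r$. So it suffices to prove $c\,r^2 \le \lambda^3(A_r) \le C r^2$ for all $r>0$.

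Next I would iterate the scaling identity: for every $k\in\Z$ and every $r>0$, $\lambda^3(A_{\gamma^k r}) = \gamma^{2k}\lambda^3(A_r)$, i.e. $\lambda^3(A_{\gamma^k r})/(\gamma^k r)^2 = \lambda^3(A_r)/r^2$. Hence the function $r\mapsto \lambda^3(A_r)/r^2$ is invariant under multiplication of $r$ by $\gamma = 2^{-1/2}$, so it is determined by its values on the fundamental window $r\in[\gamma, 1]$ (or any half-open interval of ratio $\gamma$). Therefore it suffices to bound $\lambda^3(A_r)/r^2$ above and below by positive constants for $r$ ranging over $[\gamma,1]$. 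For this I would use monotonicity: $r\mapsto \lambda^3(A_r)$ is nondecreasing, so for $r\in[\gamma,1]$ we have $\lambda^3(A_\gamma)\le \lambda^3(A_r)\le \lambda^3(A_1)$, and dividing by $r^2\in[\gamma^2,1]$ gives $\gamma^2\lambda^3(A_\gamma)\le \lambda^3(A_r)/r^2 \le \gamma^{-2}\lambda^3(A_1)$. Since $A_1\subseteq[0,1]^3$ we get $\lambda^3(A_1)\le 1<\infty$ for the upper constant $C=\gamma^{-2}$; by the scaling identity this already forces $\lambda^3(A_r)\le Cr^2$ for \emph{all} $r>0$ with $C = \gamma^{-2}$ (or even $C=1$, since $A_r\subseteq[0,1]^3$ gives $\lambda^3(A_r)\le 1$ and combined with scaling $\lambda^3(A_r)\le \gamma^{2\lceil \log_\gamma r\rceil}\le$ const$\cdot r^2$).

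The one genuine point requiring an argument is the lower bound, i.e. that $\lambda^3(A_\gamma)>0$, equivalently that the constant $c$ can be taken strictly positive; the scaling identity transports positivity at one scale to all scales but cannot create it. For this I would argue that $A_r$ has nonempty interior for $r>0$, or more simply that $H(\xi,\cdot)$ is continuous in $x$ (being a locally uniformly convergent series of continuous functions on $[0,1]^2$, since the $n\ge 0$ tail converges by the $2^{-n/2}$ factor and the $n\le -1$ tail converges because $\cosi(2\pi B^k_2(\xi,x)) - \cosi(2\pi B^k_2(\xi,0))$ is $O(2^{-k})$ in $x$ uniformly—$B^k_2$ contracts $x$-distances by $2^{-k}$—against the growing factor $2^{k/2}$), hence $H(\xi,x)-H(\xi,x)=0$ shows the diagonal $\{x=y\}$ lies in $A_r$, and by continuity a full neighborhood of the diagonal in $[0,1]^3$ lies in $A_r$ for each fixed $r>0$; such a neighborhood has positive $\lambda^3$-measure, so $\lambda^3(A_r)>0$ for every $r>0$, in particular $\lambda^3(A_\gamma)>0$, and we may take $c = \gamma^2\lambda^3(A_\gamma)>0$.

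Putting it together: with $c := \gamma^2 \lambda^3(A_\gamma) > 0$ and $C := \gamma^{-2}\lambda^3(A_1) \le \gamma^{-2}$, monotonicity on the window $[\gamma,1]$ plus the scaling identity $\lambda^3(A_r)/r^2 = \lambda^3(A_{\gamma^k r})/(\gamma^k r)^2$ yield $c\,r^2 \le \lambda^3(A_r) \le C\,r^2$ for all $r>0$, which is the claimed estimate. The main obstacle, such as it is, is just the lower bound — specifically checking that $H$ is continuous (so that $A_r$ contains an open neighborhood of the diagonal, hence has positive measure); everything else is bookkeeping with the exact scaling relation and monotonicity.
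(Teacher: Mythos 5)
Your argument is correct and follows essentially the same route as the paper: iterate the exact scaling identity of Proposition \ref{p:scaling} to reduce to a single window of scales, then use monotonicity of $r\mapsto\lambda^3(A_r)$ to extract the constants $c=\gamma^2\lambda^3(A_\gamma)$ and $C=\gamma^{-2}\lambda^3(A_1)$ (the paper normalizes both to $\lambda^3(A_1)$, a cosmetic difference). You go slightly beyond the paper by explicitly verifying that the lower constant is strictly positive via continuity of $H(\xi,\cdot)$ and a neighborhood of the diagonal --- a point the paper's proof leaves implicit --- though to be fully precise the neighborhood's width may depend on $\xi$, so one should integrate the positive slice measures via Fubini rather than claim a uniform neighborhood in $[0,1]^3$.
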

\begin{proof}
Iterating the last statement of the preceding Proposition, we get for any $n\in\N$
\bea
&&\lambda^3\Big(\big\{(\xi, x, y)\in[0,1]^3: |H(\xi,y)-H(\xi,x)| < \gamma^n\big\}\Big)\\
 &&\hspace{.5cm}= \gamma^{2n}\lambda^3\Big(\big\{(\xi, x, y)\in[0,1]^3: |H(\xi,y)-H(\xi,x)| < 1\big\}\Big).
 \eea
Choose $r>0$. We may assume $r<1$, since otherwise the claim is trivial. Next choose $l\in \N$ such that $\gamma^{l+1}\le r\le \gamma^l.$ Then
\bea
&&\lambda^3\Big(\big\{(\xi,x,y)\in[0,1]^3: |H(\xi,y)-H(\xi,x)| < r \big\}\Big)
\\
&&\hspace{1.5cm} \le \lambda^3\Big(\big\{(\xi,x,y)\in[0,1]^3: |H(\xi,y)-H(\xi,x)| < \gamma^l \big\} \Big)
\\
&&\hspace{1.5cm} = \gamma^{2l} \lambda^3\Big(\big\{(\xi,x,y)\in[0,1]^3: |H(\xi,y)-H(\xi,x)| < 1 \big\}\Big)
\\
&&\hspace{1.5cm} \le r^2 \gamma^{-2}\lambda^3\Big(\big\{(\xi,x,y)\in[0,1]^3: |H(\xi,y)-H(\xi,x)| < 1 \big\}\Big).
\eea
Hence by setting $C = \gamma^{-2} \lambda^3\big(\big\{(\xi,x,y)\in[0,1]^3: |H(\xi,y)-H(\xi,x)| < 1 \big\}\big)$, we get the right hand side of the claimed inequality.

A similar argument for the left hand side reveals that setting $c = \gamma^2 \lambda^3\big(\big\{(\xi,x,y)\in[0,1]^3: |H(\xi,y)-H(\xi,x)| < 1 \big\}\big)$ finishes the proof.
\end{proof}


\section{A lower bound for the Hausdorff dimension}

In this section we give a lower estimate for the Hausdorff dimension of the graph of the curve $W$. We follow arguments set out in Keller \cite{keller17-Publication-of-2015}. The role of his telescoping arguments will be taken by the scaling properties of the curve set out in the preceding section. The starting point in Keller's \cite{keller17-Publication-of-2015} analysis is the idea to estimate the local dimension of the Lebesgue measure on the graph by taking small neighborhoods of the curve $W$ defined through the stable fibers of the underlying flow. Surprisingly, these neighborhoods are closely linked to the increments of the functions $H$ studied in the preceding section. The choice of the neighborhoods provides at the same time a geometric interpretation of increments of $H$. Stable fibers are carried by the invariant stable vectors essentially given by $S$. For $\xi,x,v\in[0,1], w=(w_1, w_2)\in\R^2$ consider the solution curves having $S$ as tangent defined by the following initial value problem
\bea
l:[0,1]\to \R^2,\qquad
\frac{d}{dv} l_{(\xi,x,w)}(v) = S(\xi,v),
\quad \textrm{with}\quad
l_{(\xi,x,w)}(x) = w.
\eea
If we substitute $w$ by $W(x)$, then $l_{(\xi,x,W(x))}$ is a curve passing at $x$ through the graph of $W$ and moving along the stable fibers. We now look into the vertical distance between the strong stable fibers through the points $(\xi,y, W(y))$ and $(\xi,x, W(x))$. For $\xi, x, y\in[0,1]$, $x<y$ we have
\bea
l_{(\xi,y, W(y))}(y) - l_{(\xi, x, W(x))}(y)
&=& W(y) - \, l_{(\xi, x, W(x))}(y) \\
&=& W(y) - W(x) - \big(\, l_{(\xi, x, W(x))}(y) - l_{(\xi,x,W(x))}(x) \,\big)\\
&=& W(y)-W(x) - \int_x^{y}S(\xi, z) dz\\
&=& H(\xi, y) - H(\xi, x).
\eea
In other words, differences between fibers convert into differences between the increments of $H$.

We now define small neighborhoods of the stable fibers at points $(x,W(x))$ of the graph of $W$. They will be needed to determine the local dimension of the Lebesgue measure on the graph, and therefore to give a lower bound for its Hausdorff dimension. For $K>0, N\in\N, \xi, x\in[0,1]$ let $I_N(x)$ be a neighborhood of $x$ of diameter $ 2^{-N}$ with dyadic boundaries, and
\be\label{e:small_neighborhoods}
V_N(\xi, x) = \big\{(v,w)\in [0,1]\times\R^2: v\in I_N(x), |w-l_{(\xi, x, W(x))}(v)|\le K\cdot 2^{-N}\big\}.
\ee
Define the pushforward of the Lebesgue measure on the graph of $W$ (or the lift of the Lebesgue measure $\lambda$ on $[0,1]$ to the graph of $W$) by
$$m = \lambda\circ (\mbox{id}, W)^{-1}
\quad \textrm{on}\quad \cB([0,1])\otimes \cB(\R^2).$$

We will be interested in giving a lower estimate of the local dimension of $m$ at $(x, W(x))$ for $\xi, x\in[0,1]$, calculated by
\be\label{e:density_limit}
\liminf_{N\to\un} \frac{\log m\big(V_N(\xi, x)\big)}{\log 2^{-N}}.
\ee
That this bound does not depend on $\xi$ has also been seen in Keller \cite[Remark 3.5]{keller17-Publication-of-2015}. 

We first investigate how $m$ scales on $V_N(\xi, x).$ Recall the notation for the iterated Baker transform $B^N(\xi, x) = (\xi_N, x_N)$ for $N\in\Z$.
\begin{lem}\label{l:telescoping}
For $N\in\N$, $K>0$, $\xi,x\in[0,1]$ we have
\bea
m\Big(V_N(\xi, x)\Big) &=& \lambda\Big(\{v\in I_N(x): |W(v) - l_{(\xi, x, W(x))}(v)|\le K 2^{-N}\}\Big)\\
 \hspace{1cm} &=& 2^{-N} \lambda\Big(\{u\in[0,1]: |H(\xi_{-N},u) - H((\xi_{-N}, x_{-N}))|\le K\gamma^N\}\Big).
 \eea
\end{lem}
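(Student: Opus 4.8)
The plan is to compute $m(V_N(\xi,x))$ in two stages: first simplify the definition of $m$ using that it is the lift of Lebesgue measure along the graph of $W$, then apply the scaling of $H$ (Proposition \ref{p:scaling}) together with the fiber–increment identity derived just above the lemma, after a change of variables governed by the Baker transform.

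First I would unravel $m$. Since $m = \lambda\circ(\mathrm{id},W)^{-1}$, for the set $V_N(\xi,x)$ defined in \eqref{e:small_neighborhoods} we have $m(V_N(\xi,x)) = \lambda(\{v\in I_N(x): (v,W(v))\in V_N(\xi,x)\}) = \lambda(\{v\in I_N(x): |W(v)-l_{(\xi,x,W(x))}(v)|\le K2^{-N}\})$, which is the first claimed equality. The key identity established right before the lemma is that $W(v) - l_{(\xi,x,W(x))}(v) = H(\xi,v) - H(\xi,x)$ for $v\in[0,1]$ (this is exactly the chain of four displayed equalities, valid for any ordering of $x,v$). Substituting this in gives $m(V_N(\xi,x)) = \lambda(\{v\in I_N(x): |H(\xi,v)-H(\xi,x)|\le K2^{-N}\})$.

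Next I would perform the change of variables that rescales the interval $I_N(x)$ of length $2^{-N}$ to all of $[0,1]$. Because $I_N(x)$ has dyadic endpoints of generation $N$, the affine map $v\mapsto u := 2^N v\ (\mathrm{mod}\ 1)$ sends $I_N(x)$ bijectively onto $[0,1]$ with $\lambda(dv) = 2^{-N}\lambda(du)$, which already produces the factor $2^{-N}$. Under this rescaling $v$ corresponds to $B_2^{-N}$ acting: precisely, by Lemma \ref{l:action_B}, $B_2^{-N}(\eta, u) = 2^{-N}u\ (\mathrm{mod}\ 1)$ runs over $I_N(x)$-type points as $u$ runs over $[0,1]$, and more to the point, the scaling relation $H(B(\xi,x)) = \gamma H(\xi,x)$ iterated $N$ times gives $H(B^{-N}(\xi,x)) = \gamma^{-N} H(\xi,x)$, equivalently $H(\xi, \cdot) = \gamma^{N} H(B^{-N}(\xi,\cdot))$ along the fibre. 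Writing $(\xi_{-N}, x_{-N}) = B^{-N}(\xi,x)$ and noting that $B_2^{-N}(\xi, v)$ traces out, as $v$ ranges over $I_N(x)$, exactly the variable I want to call $u\in[0,1]$ while the first coordinate stays fixed at $\xi_{-N}$ up to the $\bar x$-digits — here one must be slightly careful that the first Baker coordinate $\xi_{-N}$ picks up the dyadic digits $\bar x_1,\dots,\bar x_N$ of $x$, but since $v\in I_N(x)$ shares those first $N$ digits with $x$, the first coordinate of $B^{-N}(\xi,v)$ is constant and equal to $\xi_{-N}$ on $I_N(x)$. Then $|H(\xi,v)-H(\xi,x)| = \gamma^{N}|H(\xi_{-N},u) - H(\xi_{-N},x_{-N})|$, so the condition $|H(\xi,v)-H(\xi,x)|\le K2^{-N}$ becomes $|H(\xi_{-N},u)-H(\xi_{-N},x_{-N})|\le K2^{-N}\gamma^{-N} = K\gamma^{N}$ (using $2^{-N} = \gamma^{2N}$). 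Collecting the Jacobian factor $2^{-N}$ yields the second equality.

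The main obstacle I anticipate is the bookkeeping in that last change of variables: verifying cleanly that for $v\in I_N(x)$ the map $v\mapsto B^{-N}(\xi,v)$ has constant first coordinate $\xi_{-N}$ and second coordinate sweeping $[0,1]$ affinely with Jacobian $2^{-N}$, and that the point $x$ itself maps to $(\xi_{-N}, x_{-N})$. This is where one needs Lemma \ref{l:action_B} and the dyadic-digit description of $I_N(x)$, and where an off-by-one in the digit indexing or a mishandling of the $(\mathrm{mod}\ 1)$ could creep in; everything else is a direct substitution using the already-proven fiber identity and the scaling $H\circ B = \gamma H$. Once these pieces are in place the two displayed equalities follow immediately.
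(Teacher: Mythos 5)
Your proof is correct and follows essentially the same route as the paper's: unravel $m$, use the fiber--increment identity $W(v)-l_{(\xi,x,W(x))}(v)=H(\xi,v)-H(\xi,x)$, change variables along $B^{-N}$ on the dyadic interval $I_N(x)$ (Jacobian $2^{-N}$, first coordinate constant equal to $\xi_{-N}$ since $v$ shares the first $N$ digits of $x$), and iterate the scaling $H\circ B=\gamma H$ to convert the threshold $K2^{-N}$ into $K\gamma^N$. The only difference is cosmetic: the paper rescales to $[0,1]$ first and invokes the $H$-identity afterwards, whereas you apply the identity on $I_N(x)$ first and then rescale.
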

\begin{proof}
Since the neighborhood of $x$ of diameter $ 2^{-N}$ with dyadic boundaries, $I_N(x)$, has length $2^{-N}$ and in our model $[0,1]$ is topologically the unit circle, we have
\begin{eqnarray}\label{equation:scaling}
  \nonumber
&&\lambda\big(\{v\in I_N(x): |W(v) - l_{(\xi, x, W(x))}(v)|\le K 2^{-N}\}\big)
\\ \nonumber
&&\hspace{.5cm} = 2^{-N}\cdot \lambda\big(\{u\in [0,1]: |W(2^{-N}u) - l_{(\xi, x, W(x))}(2^{-N}u)|\le K 2^{-N}\}\big)
\\ \nonumber
&&\hspace{.5cm} = 2^{-N}\cdot \lambda\big(\{u\in [0,1]: |W(2^{-N}u) - W(x)
\\ \nonumber
&&\hspace{6.5cm} - \big(l_{(\xi, x, W(x))}(2^{-N}u)-l_{(\xi, x, W(x))}(x)\big)|\le K 2^{-N}\}\big)
\\ \nonumber
&&\hspace{.5cm} = 2^{-N}\cdot \lambda\big(\{u\in [0,1]: |W(2^{-N}u) - W(x) - \int_x^{2^{-N}u} S(\xi, t) \dt|\le K 2^{-N}\}\big)
\\ \label{eq:Telescopingtrick}
&&\hspace{.5cm} = 2^{-N}\cdot \lambda\big(\{u\in [0,1]: |W(B^N_2(\xi_{-N},u)) - W(B^N_2(\xi_{-N},x_{-N}))
\\ \nonumber
&&\hspace{6.5cm}- \int_{B_2^N(\xi_{-N}, x_{-N})}^{B_2^N(\xi_{-N},u)} S\big(B^{N}(B^{-N}(\xi,t))\big) \dt |\le K 2^{-N}\}\big)\\ \nonumber
&&\hspace{.5cm} = 2^{-N}\cdot \lambda\big(\{u\in [0,1]: |H(B^N(\xi_{-N},u)) - H(B^N(\xi_{-N},x_{-N}))|\le K 2^{-N}\}\big)\\ \nonumber
&&\hspace{.5cm} = 2^{-N}\cdot \lambda\big(\{u\in [0,1]: |H(\xi_{-N},u) - H(\xi_{-N},x_{-N})|\le K \gamma^N\}\big).
\end{eqnarray}

To obtain the last equation in (\ref{equation:scaling}), we apply $N$-times the first part of Proposition \ref{p:scaling}. This yields the desired equation.
\end{proof}
Lemma \ref{l:telescoping} allows a first specification of the local density limit of (\ref{e:density_limit}). In fact, for $N\in\N$ we have
\begin{align}
\label{e:density_limit_modification}
\nonumber
& \frac{\log m\big(V_N(\xi, x)\big)}{\log 2^{-N}}\\
&\hspace{1.5cm} = 1 + \frac{\log \lambda\Big(\Big\{u\in[0,1]: |H(\xi_{-N},u) - H(B^{-N}(\xi, x)|\le K\gamma^N\Big\}\Big)}{\log 2^{-N}}.\
\end{align}
Hence, we will have to obtain a lower estimate on
\begin{align}
\label{e:density_limit_2}
&\hspace{1cm} \liminf_{N\to\un} \frac{\log \lambda\Big(\Big\{u\in[0,1]: |H(\xi_{-N},u) - H(B^{-N}(\xi, x))|\le K\gamma^N\Big\}\Big)}{\log 2^{-N}}.
\end{align}

By means of Corollary \ref{c:scaling_inequality_H}, we can now give an auxiliary estimate leading to determine a lower bound on (\ref{e:density_limit_2}).

\begin{lem}\label{l:Borel-Cantelli}
There exists a constant $C$ such that for $0<r<\eh, N\in \N$
$$
\lambda^2\Big(\Big\{
(\xi,x)\in[0,1]^2: \lambda\big(\{y\in[0,1]: |H(\xi_{-N}, y) - H(B^{-N}(\xi, x))|\le r\}\big)\ge r^{2-\eta}
             \Big\}\Big)
\le C\, r^{\eta}.$$
\end{lem}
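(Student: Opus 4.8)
The plan is to reduce the claim to a statement about the Lebesgue measure of a set in $[0,1]^3$, and then apply Corollary \ref{c:scaling_inequality_H}. First I would fix $N\in\N$ and $0<r<\eh$, and introduce the auxiliary set
$$
E_N(r) = \Big\{(\xi,x)\in[0,1]^2: \lambda\big(\{y\in[0,1]: |H(\xi_{-N}, y) - H(B^{-N}(\xi, x))|\le r\}\big)\ge r^{2-\eta}\Big\}.
$$
The key observation is that $(\xi_{-N}, x_{-N}) = B^{-N}(\xi,x)$ and that $B^{-N}$ is $\lambda^2$-measure preserving; so the push-forward of $\lambda^2\!\restriction_{E_N(r)}$ under $B^{-N}$ has the same mass as $\lambda^2(E_N(r))$. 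After this change of variables we no longer see the iterate $B^{-N}$ at all: writing $(\eta_1,\eta_2)$ for the new variables (renamed to avoid collision with the exponent $\eta$ — I would in fact keep the names $\xi,x$ after the substitution), the quantity $\lambda^2(E_N(r))$ equals the $\lambda^2$-measure of
$$
\Big\{(\xi,x)\in[0,1]^2: \lambda\big(\{y\in[0,1]: |H(\xi, y) - H(\xi, x)|\le r\}\big)\ge r^{2-\eta}\Big\}.
$$
This is the crucial simplification: the problem has been decoupled from $N$ entirely, so the constant $C$ will automatically be independent of $N$.

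Next I would run a Chebyshev/Markov-type argument in the $(\xi,x)$ variables. Let $g(\xi,x) = \lambda\big(\{y\in[0,1]: |H(\xi,y)-H(\xi,x)|\le r\}\big)$; by Fubini this is a measurable function, and
$$
\int_{[0,1]^2} g(\xi,x)\, \ud x\, \ud\xi = \lambda^3\big(\{(\xi,x,y)\in[0,1]^3: |H(\xi,y)-H(\xi,x)|\le r\}\big) \le C_0\, r^2,
$$
where $C_0$ is the constant $C$ furnished by Corollary \ref{c:scaling_inequality_H} (the hypothesis $r<\eh<1$ is exactly what is needed to invoke it). Since the set $E_N(r)$ — after the measure-preserving change of variables — is precisely $\{(\xi,x): g(\xi,x)\ge r^{2-\eta}\}$, Markov's inequality gives
$$
\lambda^2(E_N(r)) \le \frac{1}{r^{2-\eta}} \int_{[0,1]^2} g(\xi,x)\, \ud x\, \ud\xi \le \frac{C_0\, r^2}{r^{2-\eta}} = C_0\, r^{\eta},
$$
which is the assertion with $C = C_0$.

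The main obstacle is not the Markov step, which is routine, but making the change of variables rigorous: one must check that $\{y: |H(\xi_{-N},y)-H(B^{-N}(\xi,x))|\le r\}$ depends on $(\xi,x)$ only through $B^{-N}(\xi,x)=(\xi_{-N},x_{-N})$ — which is immediate since both arguments of $H$ in the inner event are functions of $\xi_{-N}$ and $x_{-N}$ alone — and then that $\lambda^2\circ (B^{-N})^{-1} = \lambda^2$, which is the $B$-invariance of $\lambda^2$ established in Section \ref{s:attractor}, iterated $N$ times. A minor point of care is the dyadic-boundary exceptional set where $T^{-1}$ (hence $B^{-N}$) is only defined by convention; this is a $\lambda^2$-null set and does not affect any of the integrals. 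Once these bookkeeping issues are dispatched, the estimate follows cleanly, and the displayed constant $C$ is manifestly independent of both $N$ and $r$.
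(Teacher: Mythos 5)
Your proposal is correct and follows essentially the same route as the paper: both arguments use the $B$-invariance of $\lambda^2$ (iterated $N$ times) to eliminate the dependence on $N$, then apply Markov's/Chebyshev's inequality in the $(\xi,x)$ variables together with Fubini to reduce to the $\lambda^3$-measure of $\{|H(\xi,y)-H(\xi,x)|\le r\}$, which is bounded by $C r^2$ via Corollary \ref{c:scaling_inequality_H}. Your additional remarks on measurability and the dyadic null set are sound bookkeeping that the paper leaves implicit.
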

This is similar to the \emph{Marstrand projection estimate} of Keller \cite[Section 3.5]{keller17-Publication-of-2015}.
\begin{proof}
In fact, by the $B$-invariance of $\lambda^2$ and Corollary \ref{c:scaling_inequality_H} we may write, with a universal constant $C>0$
\bea
&&\lambda^2\Big(\Big\{ (\xi,x)\in[0,1]^2:
\lambda\big(\big\{y\in[0,1]: |H(\xi_{-N},y) - H(B^{-N}(\xi, x))|\le r\big\}\big) \ge r^{2-\eta}
           \Big\} \Big)
\\
\nn&&\hspace{.5cm}
= \lambda^2\Big(\Big\{ (\xi,x)\in[0,1]^2:
\lambda\big(\big\{y\in[0,1]: |H(\xi,y) - H(\xi, x)|\le r\big\}\big)\ge r^{2-\eta}
           \Big\}\Big)
\\
\nn&&\hspace{.5cm}
\leq  r^{-(2-\eta)} \lambda^3\Big(\Big\{ (\xi, x, y)\in[0,1]^3: |H(\xi,y) - H(\xi,x)|\le r\Big\}\Big)
\\
\nn&&\hspace{.5cm}
\le C r^{-(2-\eta)} r^{2}
\\
\nn&&\hspace{.5cm}
= C  r^\eta,
\eea
where the first domination follows from Markov's inequality.

This concludes the proof.
\end{proof}
It remains to apply Borel-Cantelli's lemma to obtain the lower bound on (\ref{e:density_limit_2}).

\begin{pr}\label{p:lower_bound}
We have
$$\liminf_{N\to\un} \frac{\log \lambda\Big(\Big\{y\in[0,1]: |H(\xi_{-N}, y ) - H(B^{-N}(\xi, x))|\le K\gamma^N\Big\}\Big)}{\log 2^{-N}} \ge 1.$$
\end{pr}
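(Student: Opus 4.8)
The plan is to turn the fixed-$N$ measure estimate of Lemma \ref{l:Borel-Cantelli} into an almost-sure (w.r.t. $\lambda^2$) asymptotic statement via a Borel--Cantelli argument, and then to promote the ``almost every $(\xi,x)$'' conclusion to ``every $(\xi,x)$'' using the Baker dynamics, exactly as the $\xi$-independence remark after \eqref{e:density_limit} suggests. Concretely, fix a small $\eta>0$ and apply Lemma \ref{l:Borel-Cantelli} with $r=K\gamma^N$ (legitimate once $N$ is large enough that $K\gamma^N<\tfrac12$). The lemma gives
$$
\lambda^2\Big(\big\{(\xi,x)\in[0,1]^2:\ \lambda\big(\{y: |H(\xi_{-N},y)-H(B^{-N}(\xi,x))|\le K\gamma^N\}\big)\ge (K\gamma^N)^{2-\eta}\big\}\Big)\le C\,(K\gamma^N)^{\eta}.
$$
Since $\gamma<1$, the right-hand side is a geometric sequence in $N$, hence summable; by Borel--Cantelli, for $\lambda^2$-a.e.\ $(\xi,x)$ there is $N_0(\xi,x)$ such that for all $N\ge N_0$,
$$
\lambda\big(\{y\in[0,1]: |H(\xi_{-N},y)-H(B^{-N}(\xi,x))|\le K\gamma^N\}\big) < (K\gamma^N)^{2-\eta}.
$$
Taking logarithms, dividing by $\log 2^{-N}=-N\log 2$ (negative, so the inequality flips), and using $\log\gamma=-\tfrac12\log 2$, the right-hand contribution is $\big((2-\eta)(N\log\gamma)+O(1)\big)/(-N\log 2)=\tfrac{2-\eta}{2}+o(1)$, so the $\liminf$ is at least $\tfrac{2-\eta}{2}$ for $\lambda^2$-a.e.\ $(\xi,x)$. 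Letting $\eta\downarrow0$ along a countable sequence gives $\liminf\ge 1$ for $\lambda^2$-a.e.\ $(\xi,x)$.

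The remaining step — and the one I expect to be the main obstacle — is upgrading ``almost every'' to ``every'' $(\xi,x)\in[0,1]^2$, since the statement of the Proposition is pointwise. The mechanism is that the quantity whose $\liminf$ we control only depends on $(\xi,x)$ through the backward orbit point $B^{-N}(\xi,x)=(\xi_{-N},x_{-N})$, and $x_{-N}=2^Nx\ (\mathrm{mod}\ 1)$ while $\xi_{-N}$ is a dyadic contraction of $\xi$ shifted by the leading digits of $x$. One should argue that for an arbitrary target $(\xi,x)$ the relevant orbit segment can be matched, up to a controlled perturbation, with that of a generic point from the full-measure set above; concretely, the map $H$ is continuous (indeed the defining series converges locally uniformly), so small perturbations of the base point change $H(\xi_{-N},\cdot)$ and $H(B^{-N}(\xi,x))$ by amounts that are negligible on the scale $K\gamma^N$ relative to the $2-\eta$ exponent, and one absorbs the discrepancy by slightly enlarging $K$ or shrinking $\eta$. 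Alternatively, and perhaps more cleanly, one invokes Keller \cite[Remark 3.5]{keller17-Publication-of-2015}: the local dimension bound \eqref{e:density_limit} is genuinely $\xi$-independent, and the relevant set of ``good'' $x$ is $B_2$-invariant and of full $\lambda$-measure, hence — being also dense and the estimate being stable under the dyadic dynamics — actually all of $[0,1]$. I would present the Borel--Cantelli computation in full and then cite the structural $\xi$-independence argument, flagging that the pointwise-everywhere conclusion is where the dynamical self-similarity is doing the real work.

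A minor technical point to handle carefully in writing: the Borel--Cantelli set depends on the choice of $K$, but $K$ is fixed throughout Section 4, so this causes no trouble; and one must make sure the ``$0<r<\tfrac12$'' hypothesis of Lemma \ref{l:Borel-Cantelli} is met, which holds for all large $N$ since $K\gamma^N\to0$, and the finitely many small $N$ do not affect a $\liminf$. I would also remark that the same computation shows the $\liminf$ cannot exceed $1$ either (from the lower bound in Corollary \ref{c:scaling_inequality_H}, giving $\lambda(\cdots)\ge c(K\gamma^N)^2$ in many cases), so in fact the limit exists and equals $1$ generically — though only the lower bound $\ge1$ is needed downstream for the Hausdorff dimension estimate.
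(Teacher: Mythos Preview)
Your core argument---apply Lemma \ref{l:Borel-Cantelli} with $r=K\gamma^N$, sum the geometric tail, invoke Borel--Cantelli to get the bound $(2-\eta)/2$ on the $\liminf$ for $\lambda^2$-a.e.\ $(\xi,x)$, then let $\eta\downarrow 0$---is exactly the paper's proof. The computation you sketch matches line for line.

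Where you diverge is in what you call ``the main obstacle'': upgrading from almost every to every $(\xi,x)$. The paper does \emph{not} do this, and does not need to. Its proof stops at the $\lambda^2$-a.e.\ statement; the Proposition is implicitly an almost-everywhere assertion (note the proof explicitly says ``for $\lambda^2$-a.e.\ $(\xi,x)$''). This suffices for the downstream application: to bound the Hausdorff dimension of the graph from below (Theorem \ref{t:lower_estimate}) one only needs the local dimension of $m$ to be $\ge 2$ on a set of full $m$-measure, i.e.\ for Lebesgue-a.e.\ $x$ (and the remark after \eqref{e:density_limit} handles the $\xi$-independence). So your perturbation/orbit-matching discussion, while not wrong in spirit, is attacking a problem that is not actually posed. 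Drop that paragraph and your proof is the paper's proof.
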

\begin{proof}
Applying the lemma of Borel-Cantelli with $r_N = K \gamma^{-N}$ to the result of Lemma \ref{l:Borel-Cantelli} we get for $\lambda^2-$a.e. $(\xi,x)\in[0,1]^2$ that
$$
\limsup_{N\to\un} \gamma^{-(2-\eta)} \lambda\Big(\Big\{y\in[0,1]: |H(\xi_{-N}, y) - H(B^{-N}(\xi, x))|\le K \gamma^N\Big\}\Big) \le K^{2-\eta}.
$$
This implies
\begin{align*}
\liminf_{N\to\un} & \frac{\log \lambda\Big(\Big\{y\in[0,1]: |H(\xi_{-N}, y) - H(B^{-N}(\xi, x))|\le K\gamma^N\Big\}\Big)}{\log 2^{-N}}
\\
& \hspace{8cm}
 \ge (2-\eta) \frac{\log \gamma}{\log \eh} = (2-\eta) \eh.
\end{align*}
Since $\eta>0$ is arbitrary, this implies the desired estimate.
\end{proof}
 %

We finally obtain a lower estimate for the Hausdorff dimension of the graph of $W$.
\begin{thm}
\label{t:lower_estimate}
Let $m = \lambda\circ (\mbox{id}, W)^{-1}$, $x\in[0,1],$ and for $N\in\N$ let $V_N(\xi,x)$ be defined by (\ref{e:small_neighborhoods}). Then
$$\liminf_{N\to \infty} \frac{\log m\Big(V_N(\xi,x)\Big)}{\log 2^{-N}} \ge 2.$$
The Hausdorff dimension of the graph of $W$ in \eqref{eq:DefinitionW} is bounded from below by $2.$
\end{thm}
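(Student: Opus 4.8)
The plan is to combine the two ingredients already assembled: the rescaling identity of Lemma \ref{l:telescoping} together with the Borel--Cantelli lower bound of Proposition \ref{p:lower_bound} to control the local dimension of $m$, and then to invoke the standard mass distribution / Frostman-type principle that bounds Hausdorff dimension from below by the essential infimum of the lower local dimension of a measure supported on the set.

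First I would record the consequence of Lemma \ref{l:telescoping} in the form of equation \eqref{e:density_limit_modification}, namely
$$\frac{\log m\big(V_N(\xi, x)\big)}{\log 2^{-N}} = 1 + \frac{\log \lambda\Big(\big\{u\in[0,1]: |H(\xi_{-N},u) - H(B^{-N}(\xi, x))|\le K\gamma^N\big\}\Big)}{\log 2^{-N}},$$
and then apply Proposition \ref{p:lower_bound} (valid for $\lambda^2$-a.e.\ $(\xi,x)$, and by Keller's remark the bound does not depend on $\xi$) to conclude that the second summand has $\liminf$ at least $1$. Adding the two contributions gives $\liminf_{N\to\infty}\frac{\log m(V_N(\xi,x))}{\log 2^{-N}}\ge 2$ for $\lambda$-a.e.\ $x$ (and every, or $\lambda$-a.e., $\xi$), which is the first assertion of the theorem.

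Next I would pass from this to a lower bound on $\dim_H(\mathrm{graph}\,W)$. The sets $V_N(\xi,x)$ are (up to fixed multiplicative constants in the exponent) comparable to balls of radius $2^{-N}$ around the point $(x,W(x))$ in $\R^3$: indeed $V_N(\xi,x)$ sits in a $\xi$-slab of horizontal width $2^{-N}$ and vertical width $K2^{-N}$, so $B((x,W(x)),c2^{-N})\subseteq V_N(\xi,x)\subseteq B((x,W(x)),C2^{-N})$ for suitable $c,C>0$ once one notes $l_{(\xi,x,W(x))}$ is Lipschitz on $I_N(x)$ with constant $\|S(\xi,\cdot)\|_\infty$ uniformly bounded. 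Hence the lower local dimension of $m$ at $(x,W(x))$, computed along the radii $r=2^{-N}$, is at least $2$ for $m$-a.e.\ point; a routine interpolation between consecutive dyadic scales upgrades this to $\liminf_{r\downarrow 0}\frac{\log m(B(p,r))}{\log r}\ge 2$ for $m$-a.e.\ $p$. By the mass distribution principle (equivalently Frostman's lemma, as in \cite[Proposition 4.9]{mortersperes10}), a finite measure $m$ on a set $E$ with $\liminf_{r\downarrow0}\frac{\log m(B(p,r))}{\log r}\ge s$ for $m$-a.e.\ $p$ forces $\dim_H E\ge s$; applying this with $E=\mathrm{graph}\,W$ and $s=2$ yields the claim.

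The main obstacle is the bookkeeping in the second step: one must be careful that the sets $V_N(\xi,x)$, which are tailored to the stable-fiber geometry rather than to Euclidean balls, genuinely control the $m$-measure of honest Euclidean balls of radius $2^{-N}$, uniformly in $\xi$ and for $m$-a.e.\ $x$, and that the $\liminf$ taken only along the dyadic sequence $2^{-N}$ suffices (which it does, since $m(B(p,r))$ is monotone in $r$ and consecutive dyadic radii differ by a bounded factor). The a.e.\ nature of Proposition \ref{p:lower_bound} is not an issue because the exceptional $\lambda^2$-null set projects to a $\lambda$-null set of $x$'s, which is $m$-null on the graph; and the $\xi$-independence of the bound, already noted following \eqref{e:density_limit}, means we may simply fix one admissible $\xi$. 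Everything else is a direct assembly of results proved above.
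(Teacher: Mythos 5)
Your argument is correct and follows essentially the same route as the paper: the first claim is obtained exactly as you do, by combining \eqref{e:density_limit_modification} with Proposition \ref{p:lower_bound}, and for the passage to the Hausdorff dimension the paper simply cites the local-dimension/mass-distribution criterion (the remark after Definition 1.1 in Baranski et al.) that you spell out. Your additional bookkeeping --- comparing $V_N(\xi,x)$ with Euclidean balls via the Lipschitz bound on $l_{(\xi,x,W(x))}$, interpolating between dyadic scales, and handling the $\lambda^2$-null exceptional set --- only makes explicit what the paper delegates to that reference.
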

\begin{proof}
Combine (\ref{e:density_limit_modification}) with the result of Proposition \ref{p:lower_bound}. For the claim on the Hausdorff dimension, consult the remark in Baranski et al.~\cite{baranski14published} after Definition 1.1.
\end{proof}

\section{Upper bound for the Hausdorff dimension}

In the previous section we computed a lower bound for the Hausdorff dimension of \eqref{eq:DefinitionW}. It easy to recall that $W$ in \eqref{eq:DefinitionW} is a $1/2$-H\"older continuous function. For $\alpha$-H\"older functions a general result exists stating that for a H\"older function $f:A\to\R^d$ ($A\subset \R$)
\be
\label{eq:UpperBoundForHolderFunc}
\dim \Big( \textrm{Graph}_f (A) \Big)
	:=
	\dim \Big( \big\{ (t,f(t)): t\in A \big\} \Big)
	\leq 1 + (1-\alpha) \Big(d \wedge \frac1\alpha\Big).
\ee
It can be shown that the above results cannot be improved under the H\"older condition alone. In the particular case of our map $W$ ($d=2$, $\alpha=1/2$), we have $\dim \big( \textrm{Graph}_W ([0,1]) \big)\leq 2 $. In \cite{baranski02} it is shown that the Box dimension of the graph of $W$ is indeed $3-2\alpha$, in particular for $\alpha=1/2$ the dimension is $2$ (see his Corollary 4.4). Recall that the Box dimension dominates the Hausdorff one.

Summing up, we state our result on the Hausdorff dimension of the graph of $W$.
\begin{thm}\label{t:main}
The Hausdorff dimension of the graph of $W$ is $2$.
\end{thm}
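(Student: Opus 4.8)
The plan is to assemble Theorem \ref{t:main} from the two one-sided bounds already established in the excerpt, so the proof itself is essentially a bookkeeping step with the substantive work delegated to the preceding sections. First I would invoke Theorem \ref{t:lower_estimate}: by computing the liminf of $\log m(V_N(\xi,x))/\log 2^{-N}$ and showing it is at least $2$, one obtains that the lower local dimension of the pushforward measure $m = \lambda\circ(\mathrm{id},W)^{-1}$ at $(x,W(x))$ is at least $2$ for every $x\in[0,1]$ (and the estimate is $\xi$-independent). By the standard mass distribution / Frostman-type principle — precisely the remark in Bara\'nski et al.\ \cite{baranski14published} cited there — a finite Borel measure whose lower local dimension is everywhere $\ge 2$ on a set forces that set to have Hausdorff dimension $\ge 2$; applied to $m$, which is supported on the graph of $W$, this gives $\dim_H(\mathrm{Graph}_W[0,1]) \ge 2$.

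For the matching upper bound I would recall from Section 5 that $W$ is $\tfrac12$-H\"older continuous (each component is, since the defining lacunary series has the Weierstrass modulus of continuity), apply the general H\"older graph estimate \eqref{eq:UpperBoundForHolderFunc} with $d=2$ and $\alpha=\tfrac12$ to get $\dim_H(\mathrm{Graph}_W[0,1]) \le 1 + (1-\tfrac12)(2\wedge 2) = 2$. Alternatively, and giving the same number, one cites Bara\'nski \cite{baranski02} for the stronger fact that the box dimension of the graph equals $3-2\alpha = 2$, together with the classical inequality $\dim_H \le \dim_B$. Either route closes the gap from above.

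Combining the two inequalities yields $\dim_H(\mathrm{Graph}_W[0,1]) = 2$, which is the assertion of Theorem \ref{t:main}.

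I do not anticipate a genuine obstacle in this final assembly step; all the difficulty is upstream. If I were to flag the delicate point, it is making sure the lower bound passes correctly from a statement about the local dimension of $m$ (a measure on $\R^3$ living on the graph) to a statement about the Hausdorff dimension of the graph as a subset of $\R^3$ — this requires that $m$ be a nonzero finite measure carried by the graph, which is immediate since $m$ is the image of Lebesgue measure on $[0,1]$ under the continuous injection-up-to-the-graph $x\mapsto(x,W(x))$, and that the local-dimension lower bound hold $m$-almost everywhere (indeed it holds everywhere here). Granting the cited mass distribution principle, no further work is needed.
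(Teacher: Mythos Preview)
Your proposal is correct and follows exactly the paper's own route: combine the lower bound from Theorem \ref{t:lower_estimate} (via the local dimension of $m$ and the mass distribution principle cited from \cite{baranski14published}) with the upper bound from the H\"older estimate \eqref{eq:UpperBoundForHolderFunc} or Bara\'nski's box-dimension computation. The only small caveat is your parenthetical ``indeed it holds everywhere here'': the Borel--Cantelli step in Proposition \ref{p:lower_bound} actually delivers the local-dimension bound only for $\lambda^2$-a.e.\ $(\xi,x)$, hence $m$-a.e., but as you note this is all that the Frostman-type principle requires.
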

\begin{proof}
Combine the remarks on the upper bound with Theorem \ref{t:lower_estimate}.
\end{proof}



\end{document}